\numberwithin{equation}{section}
\theoremstyle{definition}
\newtheorem{thm}{Theorem}[section]
\newtheorem{prop}[thm]{Proposition}
\newtheorem{dfn}[thm]{Definition}
\newtheorem{lem}[thm]{Lemma}
\newtheorem{fct}[thm]{Fact}
\newtheorem*{dfn*}{Definition}
\newtheorem*{rmk*}{Remark}
\newtheorem*{fct*}{Fact}
\newcommand{\wh}{\widehat}
\newcommand{\inj}{\hookrightarrow}
\newcommand{\surj}{\twoheadrightarrow}
\newcommand{\simto}{\xrightarrow{\sim}}
\newcommand{\longsimto}{\xrightarrow{\ \sim \ }}
\newcommand{\tsum}{{\textstyle{\sum}}}
\newcommand{\bbC}{\mathord{\mathbb{C}}}
\newcommand{\bbF}{\mathord{\mathbb{F}}}
\newcommand{\bbN}{\mathord{\mathbb{N}}}
\newcommand{\bbQ}{\mathord{\mathbb{Q}}}
\newcommand{\bbZ}{\mathord{\mathbb{Z}}}
\newcommand{\calE}{\mathord{\mathcal{E}}}
\newcommand{\calL}{\mathord{\mathcal{L}}}
\newcommand{\calP}{\mathord{\mathcal{P}}}
\newcommand{\frka}{\mathord{\mathfrak{a}}}
\newcommand{\bfR}{\mathord{\mathbf{R}}}
\newcommand{\shO}{\mathord{\mathcal{O}}}
\newcommand{\catA}{\mathord{\mathfrak{A}}}
\newcommand{\catB}{\mathord{\mathfrak{B}}}
\newcommand{\catC}{\mathord{\mathfrak{C}}}
\newcommand{\modc}{\mathord{\mathfrak{mod}}}
\newcommand{\frkS}{\mathop{\mathfrak{S}}\nolimits}
\newcommand{\frkT}{\mathop{\mathfrak{T}}\nolimits}
\newcommand{\Sky}{\mathop{\mathfrak{Sky}}\nolimits}
\newcommand{\Coh}{\mathop{\mathfrak{Coh}}\nolimits}
\newcommand{\DCoh}{\mathop{D^b\mathfrak{Coh}}\nolimits}
\newcommand{\DFuk}{\mathop{D\mathfrak{Fuk}}\nolimits}
\newcommand{\EH}{\mathop{\mathbb{U}}\nolimits}
\newcommand{\im}{\mathop{\mathrm{im}}\nolimits}
\newcommand{\coim}{\mathop{\mathrm{coim}}\nolimits}
\newcommand{\supp}{\mathop{\mathrm{supp}}\nolimits}
\newcommand{\SL}{\mathop{\mathrm{SL}}\nolimits}
\newcommand{\Sk}{\mathop{\mathrm{Sk}}\nolimits}
\newcommand{\Aut}{\mathop{\mathrm{Aut}}\nolimits}
\newcommand{\Ext}{\mathop{\mathrm{Ext}}\nolimits}
\newcommand{\Hom}{\mathop{\mathrm{Hom}}\nolimits}
\newcommand{\Iso}{\mathop{\mathrm{Iso}}\nolimits}
\newcommand{\Hall}{\mathop{\mathrm{Hall}}\nolimits}
\newcommand{\DHall}{\mathop{\mathrm{DHall}}\nolimits}
\newcommand{\Jac}{\mathop{\mathrm{Jac}}\nolimits}
\newcommand{\Spec}{\mathop{\mathrm{Spec}}\nolimits}
\newcommand{\Spf}{\mathop{\mathrm{Spf}}\nolimits}
\newcommand{\mon}{\text{mon}}
\newcommand{\Tate}{\text{Tate}}
\newcommand{\torus}{\text{torus}}
\newcommand{\mnd}[1]{\langle\!\langle #1 \rangle\!\rangle}
\begin{document}


\title{Elliptic Hall algebra on $\mathbb{F}_1$}

\author{Shintarou Yanagida}
\address{Graduate School of Mathematics, Nagoya University 
Furocho, Chikusaku, Nagoya, Japan, 464-8602.}
\email{yanagida@math.nagoya-u.ac.jp}

\date{August 30, 2017}

\begin{abstract}
We construct Hall algebra of elliptic curve over $\mathbb{F}_1$
using the theory of monoidal scheme due to Deitmar
and the theory of Hall algebra 
for monoidal representations due to Szczesny.
The resulting algebra is shown to be a specialization of 
elliptic Hall algebra studied by Burban and Schiffmann.
Thus our algebra is isomorphic to 
the skein algebra for torus
by the recent work of Morton and Samuelson.
\end{abstract}

\maketitle

\section{Introduction}

This note is motivated to understand 
in terms of categorical viewpoint
the recent work of Morton and Samuelson \cite{MS}
establishing the algebra isomorphism between 
the torus skein algebra and the elliptic Hall algebra.

In the late 1980s, Turaev \cite{Turaev:ASMP, Turaev}
introduced the skein algebra $\Sk(\Sigma)$
as a $q$-deformation of 
the Goldman Lie algebra \cite{Goldman}
on an oriented surface $\Sigma$.
The Goldman Lie algebra is the Lie algebra 
encoding the symplectic structures of 
character varieties on $\Sigma$.
Recently, Morton and Samuelson \cite{MS}
discovered remarkable
relationship between skein algebra for a torus $T$ 
and the Ringel-Hall algebra for an elliptic curve $E$.
Let us start with this introduction by explaining 
Turaev's skein algebra.


\subsection{Skein algebra}
\label{subsec:skein}

Let $R:=\bbZ[s^{\pm1},v^{\pm1}]$.
A skein module of an oriented 3-fold $M$ is 
the quotient of the $R$-module spanned by 
the isotopy classes of framed oriented links in $M$
by the skein relation $(\text{sk})$ 
and the framing relation $(\text{fr})$.
\begin{align*}
S(M) := 
 \left<
 \begin{array}{l}
 \text{isotopy classes of} \\
 \text{framed oriented links in $M$} 
 \end{array}
 \right>_{R\text{-lin}}
 \Big/ (\text{sk}),(\text{fr})
\end{align*}
The skein relation $(\text{sk})$ 
is defined 
as the following local diagram.

\hskip6em
{\unitlength 0.1in%
\begin{picture}(28.0,4.0)(-6.0,-6.0)%
\special{pn 13}%
\special{pa 1200 600}%
\special{pa 1350 450}%
\special{fp}%
\special{pn 13}%
\special{pa 200 600}%
\special{pa 600 200}%
\special{fp}%
\special{sh 1}%
\special{pa 600 200}%
\special{pa 539 233}%
\special{pa 562 238}%
\special{pa 567 261}%
\special{pa 600 200}%
\special{fp}%
\special{pn 13}%
\special{pa 600 600}%
\special{pa 450 450}%
\special{fp}%
\special{pn 13}%
\special{pa 350 350}%
\special{pa 200 200}%
\special{fp}%
\special{sh 1}%
\special{pa 200 200}%
\special{pa 233 261}%
\special{pa 238 238}%
\special{pa 261 233}%
\special{pa 200 200}%
\special{fp}%
\special{pn 13}%
\special{pa 1600 600}%
\special{pa 1200 200}%
\special{fp}%
\special{sh 1}%
\special{pa 1200 200}%
\special{pa 1233 261}%
\special{pa 1238 238}%
\special{pa 1261 233}%
\special{pa 1200 200}%
\special{fp}%
\special{pn 13}%
\special{pa 1450 350}%
\special{pa 1600 200}%
\special{fp}%
\special{sh 1}%
\special{pa 1600 200}%
\special{pa 1539 233}%
\special{pa 1562 238}%
\special{pa 1567 261}%
\special{pa 1600 200}%
\special{fp}%
\special{pn 13}%
\special{ar 2800 400 100 200 4.7123890 1.5707963}%
\special{fp}%
\special{sh 1}%
\special{pa 2875 410}%
\special{pa 2925 410}%
\special{pa 2900 340}%
\special{pa 2875 410}%
\special{fp}%
\special{pn 13}%
\special{ar 3200 400 100 200 1.5707963 4.7123890}%
\special{fp}%
\special{sh 1}%
\special{pa 3075 410}%
\special{pa 3125 410}%
\special{pa 3100 340}%
\special{pa 3075 410}%
\special{fp}%
\put(-5.00,-5.00){\makebox(0,0)[lb]{{\large(sk)}}}%
\put( 8.00,-4.80){\makebox(0,0)[lb]{{\large $-$}}}%
\put(18.00,-4.70){\makebox(0,0)[lb]{{\large $=$}}}%
\put(21.00,-5.00){\makebox(0,0)[lb]{{\large $(s-s^{-1})$}}}%
\end{picture}}%

\noindent
We omit the definition of the framing relation.
It concerns with twists of framed links, 
and the variable $v$ counts the number of twists.

Let $I$ denote the unit interval.
The skein algebra of oriented surface $\Sigma$ is 
$\Sk(\Sigma) := S(\Sigma \times I)$ as an $R$-module
with the multiplication given by
placing one copy of $\Sigma \times I$ 
on top of another $\Sigma \times I$.

For example, if $\Sigma$ is a torus,
we set $L_{0,1}$, $L_{1,1}$ as in the diagrams below.
The square shows the 

\hskip 12em
$L_{0,1}$
\hskip 6.5em
$L_{1,1}$
\hskip 5.5em
$L_{1,1} \cdot L_{0,1}$

\hskip10em
{\unitlength 0.1in%
\begin{picture}(10.0,4.00)(2.0,-8.00)%
\special{pn 8}%
\special{pa 400 400}%
\special{pa 400 800}%
\special{pa 800 800}%
\special{pa 800 400}%
\special{pa 400 400}%
\special{fp}%
\special{pn 8}%
\special{pa 600 400}%
\special{pa 600 800}%
\special{fp}%
\special{sh 1}%
\special{pa 570 630}%
\special{pa 630 630}%
\special{pa 600 560}%
\special{pa 570 630}%
\special{fp}%
\special{pn 8}%
\special{pa 1600 400}%
\special{pa 1600 800}%
\special{pa 2000 800}%
\special{pa 2000 400}%
\special{pa 1600 400}%
\special{fp}%
\special{pn 8}%
\special{pa 1600 800}%
\special{pa 2000 400}%
\special{fp}%
\special{sh 1}%
\special{pa 1815 585}%
\special{pa 1770 670}%
\special{pa 1730 630}%
\special{pa 1815 585}%
\special{fp}%
\special{pn 8}%
\special{pa 2800 400}%
\special{pa 2800 800}%
\special{pa 3200 800}%
\special{pa 3200 400}%
\special{pa 2800 400}%
\special{fp}%
\special{pn 8}%
\special{pa 2800 800}%
\special{pa 3200 400}%
\special{fp}%

\special{pn 8}%
\special{pa 3000 400}%
\special{pa 3000 550}%
\special{fp}%

\special{pn 8}%
\special{pa 3000 650}%
\special{pa 3000 800}%
\special{fp}%
\special{pn 8}%
\special{sh 1}%
\special{pa 2945 660}%
\special{pa 2900 745}%
\special{pa 2860 705}%
\special{pa 2945 660}%
\special{fp}%
\special{pn 8}%
\special{sh 1}%
\special{pa 2970 510}%
\special{pa 3030 510}%
\special{pa 3000 440}%
\special{pa 2970 510}%
\special{fp}%
\end{picture}}%

\noindent
Then the skein relation implies 
the following equation.
The corresponding diagrams are denoted below
with orientations on links omitted.

\hskip11em
$L_{1,1} \cdot L_{0,1} \  = \ 
 L_{0,1} \cdot L_{1,1}+(s-s^{-1})L_{1,2}$.

\hskip10em
{\unitlength 0.1in%
\begin{picture}(10.0,4.00)(2.0,-8.00)%
\special{pn 8}%
\special{pa 400 400}%
\special{pa 400 800}%
\special{pa 800 800}%
\special{pa 800 400}%
\special{pa 400 400}%
\special{fp}%
\special{pn 8}%
\special{pa 400 800}%
\special{pa 800 400}%
\special{fp}%
\special{pn 8}%
\special{pa 600 400}%
\special{pa 600 550}%
\special{fp}%
\special{pn 8}%
\special{pa 600 650}%
\special{pa 600 800}%
\special{fp}%
\special{pn 8}%
\special{pa 1200 400}%
\special{pa 1200 800}%
\special{pa 1600 800}%
\special{pa 1600 400}%
\special{pa 1200 400}%
\special{fp}%
\special{pn 8}%
\special{pa 1200 800}%
\special{pa 1350 650}%
\special{fp}%
\special{pn 8}%
\special{pa 1450 550}%
\special{pa 1600 400}%
\special{fp}%
\special{pn 8}%
\special{pa 1400 400}%
\special{pa 1400 800}%
\special{fp}%
\special{pn 8}%
\special{pa 2400 400}%
\special{pa 2400 800}%
\special{pa 2800 800}%
\special{pa 2800 400}%
\special{pa 2400 400}%
\special{fp}%
\special{pn 8}%
\special{pa 2400 800}%
\special{pa 2600 400}%
\special{fp}%
\special{pn 8}%
\special{pa 2600 800}%
\special{pa 2800 400}%
\special{fp}%
\end{picture}}%

\begin{fct*}[{Turaev, \cite{Turaev}}]
$\Sk(\Sigma)$ is an $s$-deformation of 
the Goldman Lie algebra of $\Sigma$.
\end{fct*}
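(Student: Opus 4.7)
The plan is to verify the statement by a two-step procedure: first identify the classical specialization $s=1$ of $\Sk(\Sigma)$ with (the symmetric algebra of) the underlying module of the Goldman Lie algebra, then show that the first-order bracket $\frac{1}{s-s^{-1}}[\,\cdot\,,\,\cdot\,]$ reduces mod $(s-1)$ to the Goldman bracket.

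First I would set $s=1$ and $v=1$ in the defining relations. The skein relation $(\text{sk})$ degenerates to $L_+ - L_- = 0$, so over- and under-crossings become indistinguishable in $\Sk(\Sigma)|_{s=1}$; the framing relation at $v=1$ similarly kills the dependence on the number of twists. Consequently, an isotopy class of a framed oriented link in $\Sigma \times I$ is determined only by the underlying collection of free homotopy classes of its components in $\Sigma$. This identifies $\Sk(\Sigma)|_{s=1, v=1}$ with the symmetric algebra on the free $\bbZ$-module $V$ spanned by free homotopy classes of oriented loops on $\Sigma$, which is exactly the underlying module of the Goldman Lie algebra. Commutativity is automatic from the same observation: the product $\alpha \cdot \beta$ is defined by stacking $\alpha$ above $\beta$ in $\Sigma \times I$, and swapping heights changes only the crossings, which no longer matters at $s=1$.

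Next I would analyze the commutator $[\alpha,\beta] := \alpha\beta - \beta\alpha$ in $\Sk(\Sigma)$ for two oriented loops $\alpha,\beta$ put in general position on $\Sigma$. Swapping the heights at a single transverse intersection point $p \in \alpha \cap \beta$ replaces a positive crossing by a negative one (or vice versa, depending on the local orientation sign $\epsilon_p = \pm 1$). Applying $(\text{sk})$ locally at $p$ yields
\begin{equation*}
\alpha\beta - \beta\alpha
 = (s-s^{-1})\sum_{p \in \alpha\cap\beta} \epsilon_p \cdot (\alpha *_p \beta),
\end{equation*}
where $\alpha *_p \beta$ denotes the oriented smoothing (splicing) of $\alpha$ and $\beta$ at $p$ compatible with the orientations. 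Dividing by $(s-s^{-1})$ and reducing modulo $(s-1)$ gives a well-defined antisymmetric bracket on $V$, and by the very definition recalled in Goldman \cite{Goldman}, this is the Goldman bracket
\begin{equation*}
[\alpha,\beta]_{\mathrm{Gold}}
 = \sum_{p\in\alpha\cap\beta} \epsilon_p \cdot (\alpha*_p\beta).
\end{equation*}

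The Jacobi identity and associativity of the deformation then follow formally from the associativity of $\Sk(\Sigma)$, so $\Sk(\Sigma)$ is an associative $R$-algebra whose semiclassical limit recovers the Goldman Lie algebra, which is precisely the desired deformation statement.

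The main obstacle I anticipate is bookkeeping: one must verify that the sign convention built into $(\text{sk})$ (which distinguishes $L_+$ from $L_-$ via the orientations of the two strands) agrees with Goldman's intersection-number convention at each crossing, and that the oriented smoothing produced by the skein resolution coincides with Goldman's splicing rule. A secondary technical point is to check that the framing relation does not affect the argument above beyond the specialization $v=1$, since the commutator computation above ignores twists; this is where one uses that the twist contribution is of higher order in $v-1$ and does not pollute the leading-order bracket.
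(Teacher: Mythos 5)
The paper does not actually prove this statement: it is stated as an unnumbered \emph{Fact} and attributed directly to Turaev \cite{Turaev}, so there is no in-paper argument to compare against. Your sketch does follow the shape of Turaev's own argument — specialize to exhibit the commutative classical limit, then read off the Poisson bracket from the first-order term of the commutator via the skein relation — so the approach is the right one; but since the paper offers no proof, what follows is a comparison against the standard argument rather than against anything in the text.

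Two places in your sketch need to be tightened before they would count as a proof. First, the step ``consequently, an isotopy class of a framed oriented link in $\Sigma\times I$ is determined only by the underlying collection of free homotopy classes of its components'' is doing most of the work and is not a formal consequence of $L_+ = L_-$; you need the topological input that, in $\Sigma\times I$, ambient isotopy together with crossing changes (and kink moves) generates free homotopy of each component and lets components pass through each other. That is a classical fact, but it is precisely the nontrivial geometric content of the identification $\Sk(\Sigma)|_{s=1,v=1}\simeq S(V)$ and should be cited or argued, not asserted. Second, the claim that the framing relation ``is of higher order in $v-1$ and does not pollute the leading-order bracket'' is too quick: the framing variable enters multiplicatively through the writhe, and when you compute $\alpha\beta-\beta\alpha$ and resolve crossings one at a time, you must verify that the accumulated $v$-factors cancel between the two orderings (or at least contribute only at order $(s-s^{-1})(v-1)$) so that dividing by $s-s^{-1}$ and then setting $s=v=1$ is unambiguous. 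Relatedly, be careful with the phrase ``$s$-deformation'': the Goldman Lie algebra is recovered as the degree-one part of the Poisson bracket on the commutative specialization, so what one really proves is that $\Sk(\Sigma)$ is a deformation quantization of $(S(V),\{\cdot,\cdot\}_{\mathrm{Goldman}})$, with $s$ as the quantization parameter and $v$ specialized consistently; stating this precisely avoids the appearance that $v$ has been set to $1$ by fiat.
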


Recall that the Goldman Lie algebra \cite{Goldman}
is the Lie algebra whose underlying vector space 
is given by the space of isotropy classes of 
oriented loops on $\Sigma$
and whose Lie bracket is given by 
\[
 [\langle \alpha\rangle, \langle \beta\rangle]_{\text{Goldman}}
=\sum_{p \in \alpha \cap \beta} \pm \langle \alpha_p \beta \rangle.
\]
Here $\alpha:S^1 \to \Sigma$ is a loop on $\Sigma$,
and $\langle \alpha \rangle \in \pi^1(\Sigma)$ 
is its class in the fundamental group.
$\alpha\cap \beta$ is the set of intersection points,
and $\alpha_p\beta$ means a loop obtained by 
$\alpha$ and $\beta$ joined at $p$.
The sign $\pm$ is defined by the orientation of $\Sigma$
and the intersection behavior of $\alpha$ and $\beta$ at $p$.

The result due to Morton and Samuelson \cite{MS} is that 
the skein algebra for a torus is isomorphic to 
a specialization of elliptic Hall algebra.
So let us turn to explain the Hall algebras.

\subsection{Hall algebra}
\label{subsec:intro:Hall}

Let us briefly recall the theory of Hall algebra.
We refer \cite{Sc:lect} as a good review on the subject.

Let $\bbF_p$ is a finite field.
A finitary category $\catC$ 
is an $\bbF_p$-linear abelian category such that  
hom-spaces are of finite dimensional.
Denote by $\Iso(\catC)$ the set of isomorphism classes 
of objects in $\catC$.
The Ringel-Hall algebra \cite{Ringel} for $\catC$ is 
the $\bbQ$-vector space
\[
 \Hall(\catC) := 
 \{ f: \Iso(\catC)  \to \bbQ \mid \supp(f) < \infty \}
\]
with the multiplication 
\[
 f*g([M]) := \sum_{N \subset M}f([M/N])g([N]).
\]
Here we denoted by $[M]$ 
the isomorphism class of the object $M$.

If $\catC$ is hereditary, then by Green \cite{Green},
$\Hall(\catC)$ is a bialgebra with the comultiplication
\[
 \Delta(f)([M],[N]) := f([M \otimes N])
\]
together with the Hopf pairing 
\[
 \langle \delta_{[M]},\delta_{[N]} \rangle 
 := \delta_{M,N}/\#\Aut_{\catC}(M).
\]
Here we denoted by $\delta_{[M]}$ 
the characteristic function of $[M] \in \Iso(\catC)$.

Since we have a bialgebra with a Hopf pairing,
we can consider its  Drinfeld double
(see \cite{Sc:lect} and  \cite{J} for good accounts). 
For a hereditary and finitary abelian category $\catC$,
the Drinfeld double $\DHall(\catC)$
of $\Hall(\catC)$ is defined to be 
\[
 \DHall(\catC) := 
 \Hall(\catC) \otimes_{\bbQ} \Hall(\catC)
\]
as a vector space, 
and the multiplication is given by 
\[
 (m\otimes 1)\cdot(1 \otimes n) = m\otimes n,
 \quad
 \tsum \langle m_{(2)},n_{(1)}\rangle m_{(1)} \otimes n_{(2)}
=\tsum \langle m_{(1)},n_{(2)}\rangle 
 (1\otimes n_{(1)})\cdot(m_{(2)} \otimes 1).
\]
Here we used Sweedler's notation 
$\Delta(m)=\sum m_{(1)} \otimes m_{(2)}$.

Now we can state the result of Burban and Schiffmann \cite{BS}
on the Hall algebra for an elliptic curve.

\begin{fct}[{Burban-Schiffmann, \cite{BS}}]
\label{fct:BS}
Let $E$ be an elliptic curve over $\bbF_p$ and 
set $\catC := \Coh_{E}$, the category of coherent sheaves
over $E$.
Then
$\DHall(\catC)$ has a basis 
$\{u_{x} \mid x \in \bbZ^2\setminus\{(0,0)\}\}$
with relations
\\
\quad
(1)
$[u_x,u_y]=0$ if $x,y$ parallel
\\
\quad
(2)
$[u_x,u_y]=\pm\theta_{x+y}/\alpha_1$ 
if $\Delta(0,x,x+y)$ contains 
no integral points.
\\
Here $\theta_x$ is defined by
\[
 1+\sum_{k\ge1} \theta_{k z} w^k
 =\exp\Bigl(\sum_{n\ge1}\alpha_n u_{n z} w^n\Bigr)
\]
with
$\alpha_n:=(1-q^n)(1-t^{-n})(1-q^n t^{-n})/n$.
The symbols $q,t^{-1}$ denote the Weil numbers of $E$. 
\end{fct}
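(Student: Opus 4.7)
The plan is to follow a slope-stratification strategy, exploiting Atiyah's classification of sheaves on $E$ together with the action of $\SL(2,\bbZ)$ on $D^b(\Coh_E)$. First, I would decompose $\catC := \Coh_E$ into slope subcategories $\catC_\mu$ for $\mu \in \bbQ \cup \{\infty\}$, where $\catC_\mu$ consists of semistable sheaves of slope $\mu$. By Atiyah's theorem and the Harder--Narasimhan filtration, every coherent sheaf is built from such semistable pieces; moreover each $\catC_\mu$ is equivalent to $\catC_\infty$ (the category of torsion sheaves) via a derived autoequivalence realized by an element of $\SL(2,\bbZ)$ that sends $(0,1)$ to a primitive vector on the ray of $\mu$. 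Since $\Hall(\catC_\infty)$ is commutative (Kapranov), so is each $\Hall(\catC_\mu)$, and the full Hall algebra is $\bbZ^2$-graded by (rank, degree).

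Second, I would define $u_x$ ray by ray: for primitive $x$, transport the canonical ``first Eisenstein'' element of $\Hall(\catC_\infty)$ into $\Hall(\catC_{\text{slope}(x)})$, and let the higher $u_{nx}$ be prescribed by the exponential relation defining $\theta_\bullet$. Relation~(1) then follows directly, since $x,y$ parallel means they lie in a single commutative subalgebra (on opposite rays the Drinfeld double's defining relation combined with vanishing of the relevant Hopf pairing forces commutativity). For the harder relation~(2), by $\SL(2,\bbZ)$-equivariance one reduces to a base case such as $x=(1,0)$ and $y=(0,1)$: the commutator in $\DHall(\catC)$ is computed via Sweedler expansion, and since $u_x, u_y$ are spherical their coproducts are diagonal, reducing $[u_x,u_y]$ to a count of short exact sequences $0 \to \shO_p \to \calE \to \calL \to 0$ weighted by automorphism orders. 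The ``no interior lattice point'' hypothesis guarantees that $\calE$ is indecomposable of class $x+y$, so the result is proportional to $u_{x+y}$; the proportionality constant is read off from Riemann--Roch and the Lefschetz trace on $H^1(E,\bbQ_\ell)$, yielding precisely $\alpha_1 = (1-q)(1-t^{-1})(1-qt^{-1})$.

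The main obstacle is twofold: verifying relation~(2) cleanly, and proving that the listed relations are complete. For completeness I would compare Hilbert series, computing the $\bbZ^2$-graded dimension of the abstract algebra defined by (1)--(2) and matching it against that of $\DHall(\catC)$, itself computed from Kapranov's description of $\Hall(\catC_\mu)$ on each slope together with the Harder--Narasimhan decomposition. Tracking the sign in relation~(2) and identifying the Weil numbers $q, t^{-1}$ as the Frobenius eigenvalues on $H^1(E,\bbQ_\ell)$ also demands careful bookkeeping of the conventions entering the Drinfeld double construction.
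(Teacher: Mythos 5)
This statement is labelled as a \emph{Fact} and is simply quoted from Burban--Schiffmann \cite{BS}; the paper itself gives no proof of it, so there is no internal argument to compare your sketch against. The result is used here as an imported black box: the whole point of the note is to reproduce the \emph{conclusion} of Fact~\ref{fct:BS} in the monoidal ($\bbF_1$) setting, not to reprove it over $\bbF_p$.

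That said, your sketch does track the actual Burban--Schiffmann strategy reasonably faithfully: slope filtration via Atiyah's classification, reduction of every slope subcategory $\catC_\mu$ to torsion sheaves by an $\SL(2,\bbZ)$ autoequivalence coming from Fourier--Mukai and tensoring by a degree-one line bundle, commutativity of each slope subalgebra, $\SL(2,\bbZ)$-equivariant reduction of relation (2) to a single elementary triangle, and a Hilbert-series / graded-dimension count to establish completeness of the presentation. Two small imprecisions are worth flagging. First, in \cite{BS} the elements $u_{n\mathbf{x}}$ (their $T_{n\mathbf{x}_0}$) for $\mathbf{x}$ primitive and $n\ge 1$ are all defined \emph{geometrically} as weighted sums over semistable sheaves; the $\theta$'s are then introduced by the exponential generating function. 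Your phrase ``let the higher $u_{nx}$ be prescribed by the exponential relation defining $\theta_\bullet$'' inverts that logic: if you define $u_{nx}$ from $\theta$ you must then prove independently that the so-defined elements agree with the Hall-algebra elements, which is itself a nontrivial step. Second, ``their coproducts are diagonal'' is not literally correct for the $u_x$; the commutator computation in the Drinfeld double really does require tracking the full Green coproduct and the Hopf pairing, and this is where the assumption that $\Delta(0,x,x+y)$ has no interior lattice point intervenes (it controls which semistable classes can appear as extensions). These are gaps you would need to fill to turn the sketch into a proof, but since the paper delegates the entire statement to \cite{BS}, they do not affect a reading of the note itself.
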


Recall that the Weil numbers $q,t^{-1}$ enjoys the properties
\begin{equation}\label{eq:qt-Weil}
 q/t=p,\quad
 \zeta_E(z)
:=\exp\Bigl(\sum \# E(\bbF_{p^n})\frac{w^n}{n}\Bigr)
 =\frac{(1-q z)(1-z/t)}{(1-z)(1-p z)}.
\end{equation}

Let us call the algebra $\DHall(\Coh_E)$ 
the \emph{elliptic Hall algebra}. 
Burban and Schiffmann also showed in \cite{BS} 
that $\DHall(\Coh_E)$ 
has an integral basis $\{w_x\}$, 
where each $w_x$ is proportional to $u_x$,
in the following sense.

\begin{dfn*}
Let $q,t$ be indeterminates.
Define $\EH_{q,t}$ to be
the algebra over $\bbZ[q^{\pm1/2},t^{\pm1/2}]$ generated by 
$w_x$'s with relations (1), (2) in Fact \ref{fct:BS}.
\end{dfn*}

We omit the proportional factor $w_x/u_x$.
The meaning of integral basis is that 
if we choose $q$ and $t$ to be the Weil numbers of $E$,
then we have 
\[
 \DHall(\Coh_E) \otimes_{\bbQ} \bbC \simeq 
 \EH_{q,t} \otimes_{\bbZ[q^{\pm1/2},t^{\pm1/2}]} \bbC.
\]

\subsection{The Morton-Samuelson isomorphism}

Now we can explain
the Morton-Samuelson isomorphism mentioned in the beginning.
Denote by $L_{r,d} \in \Sk(\torus)$
the class of loop winding $r$-times rightwards 
and winding $d$-times upwards,
as in $L_{0,1}$ and $L_{1,1}$ in \S\ref{subsec:skein}.

\begin{fct*}[{Morton-Samuelson, \cite{MS}}]
\begin{align*}
\Sk(\torus) &\longsimto
 \EH_{s^2,s^2} \otimes_{\bbZ[s^{\pm1}]} 
 \bbZ[s^{\pm1},v^{\pm1}],
\\
 L_{r,d} &\longmapsto \ 
 w_{(r,d)} \text{ if } \gcd(r,d)=1.
\end{align*}
\end{fct*}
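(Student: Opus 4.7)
The plan is to construct $\Phi\colon \Sk(\torus) \to \EH_{s^2,s^2} \otimes_{\bbZ[s^{\pm1}]} \bbZ[s^{\pm1},v^{\pm1}]$ by declaring its values on a torus-knot spanning set, then verifying compatibility with both the skein and framing relations. The skein module is generated by framed oriented multi-loops, and a convenient family of generators is $\{L_{r,d}\}_{(r,d)\in\bbZ^2\setminus\{(0,0)\}}$ with $L_{r,d}$ the $(r,d)$-torus knot carrying the blackboard framing. For $\gcd(r,d)=1$ set $\Phi(L_{r,d}):=w_{(r,d)}$. For $(r,d)=n(r',d')$ with $\gcd(r',d')=1$, the curve $L_{r,d}$ is an $n$-fold parallel push-off of $L_{r',d'}$, and its image has to be a specific polynomial in $\{w_{k(r',d')}\}_{1\le k\le n}$ dictated by the HOMFLY expansion of parallel copies as a power-sum; this mirrors the generating-series relation between $\theta_{nz}$ and $u_{nz}$ from Fact~\ref{fct:BS}.

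Next I would check compatibility with the relations. The pivotal local computation is for two primitive classes $x=(r_1,d_1)$ and $y=(r_2,d_2)$ with $|r_1 d_2 - r_2 d_1|=1$: the corresponding loops intersect transversally in a single point, and the HOMFLY skein relation at this crossing yields
\[
L_x L_y - L_y L_x = (s - s^{-1})\, L_{x+y}
\]
up to a sign fixed by orientation. Under $\Phi$ this must reproduce relation~(2) of Fact~\ref{fct:BS}, $[u_x,u_y]=\pm\theta_{x+y}/\alpha_1$, specialized at $q=t=s^2$; the integral basis $\{w_x\}$ of \cite{BS} is engineered so that $\theta_{x+y}/\alpha_1$ turns into $(s-s^{-1})w_{x+y}$ after rescaling. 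When the triangle $\Delta(0,x,x+y)$ contains interior lattice points, one reduces to the Farey-neighbor case by iterating the skein expansion along a Farey path joining $x$ and $y$. The framing relation is absorbed into the scalar $v$ by fixing the blackboard-framing convention on each $L_{r,d}$.

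For bijectivity, surjectivity is immediate because $\{w_{(r,d)}\}_{\gcd(r,d)=1}$ generates $\EH_{q,t}$ as an algebra; injectivity follows by comparing PBW-type bases on both sides (multicurves with partition multiplicities on the skein side, and Burban-Schiffmann's integral basis indexed by tuples of primitive vectors decorated with partitions on the Hall side), where $\Phi$ carries basis to basis. The main obstacle I expect is the handling of the specialization $q=t=s^2$: there $\alpha_n=(1-q^n)(1-t^{-n})(1-q^n t^{-n})/n$ vanishes identically, so the generating-series definition of $\theta_x$ in terms of $u_x$ becomes formally singular, and it is precisely the integral basis $\{w_x\}$ that renders the limit well-defined. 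Matching this rescaling with the skein-theoretic expansion of parallel copies is where the Macdonald-polynomial combinatorics of \cite{MS} enters, and constitutes the essential technical content of the theorem.
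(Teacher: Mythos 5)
The paper does not prove this statement: it is quoted as a known Fact from \cite{MS}, so there is no internal proof to compare against, and I evaluate your sketch against the actual Morton--Samuelson argument. Your broad strategy matches theirs: build the map on torus knots, verify the commutator via the skein relation at a single crossing when $|r_1d_2-r_2d_1|=1$, propagate by a Farey/convexity induction, and handle the specialization $q=t$ (where every $\alpha_n$ vanishes) via the integral basis $\{w_x\}$. However, the sketch has three genuine gaps relative to the real proof. First, for $(r,d)=n(r',d')$ with $n\ge 2$, the element mapping to $w_{(r,d)}$ is \emph{not} the $n$-fold parallel push-off of $L_{r',d'}$; in \cite{MS} it is the $(r',d')$-torus knot decorated by the power-sum element $P_n$ of the HOMFLY skein of the annulus, and the identification of that decorated knot with $w_{n(r',d')}$ is itself a substantial theorem using the symmetric-function structure of the annulus skein. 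Conflating ``parallel copies'' with ``$P_n$-decorations'' would break precisely the generating-series relation between $\theta_{nz}$ and $u_{nz}$ that you invoke. Second, surjectivity is not ``immediate'' from generation on the Hall side: one must independently show that decorated torus knots span $\Sk(\torus)$, using the $\bbZ^2$-classification of conjugacy classes in $\pi_1(T^2)$ together with the basis structure of the annulus skein. Third, the framing relation cannot simply be ``absorbed into the scalar $v$''; keeping the writhe and $v$-power bookkeeping consistent with the $\SL(2,\bbZ)$-action on both sides is part of what \cite{MS} must verify. You have correctly located the specialization $q=t=s^2$ as the crux of the matter, but the sketch does not resolve the decoration-versus-parallel-copy issue, which is exactly where the Macdonald-type combinatorics enter.
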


Let us remark that 
$\EH_{q,t=q}$ is still well-defined since
$\theta_x/\alpha_1 = ([\gcd(x)]_{q^{1/2}})^2 u_x$.
For $t=q$, the proportional factor between $w_x$ and $u_x$
is given by 
\[
 w_x=(q^{d/2}-q^{-d/2})u_x
\] 
with $d:=\gcd(x)$.

\subsection{The result of this note}
\label{subsec:intro:outline}

Morton and Samuelson showed that the isomorphism 
between the skein algebra and the elliptic Hall algebra 
``by hand".
However, as they mentioned 
in the introduction of their paper \cite{MS},
it is natural to invoke 
homological mirror symmetry for torus/elliptic curve
\cite{K,PZ}:
\[
 \DFuk(\torus) \simeq \DCoh(E), \quad
 L_{1,d} \longleftrightarrow  \calL_d.
\]
Here $L_{1,d}$ is the Lagrangian submanifold 
corresponding to the loop $L_{1,d}$,
and $\calL_{d}$ is the line bundle over 
the elliptic curve $E$ defined over $\bbC$.
One may guess that the Morton-Samuelson isomorphism  
comes from equivalence of categories.

However we have one drawback:
the algebra $\EH_{s^2,s^2}$ would correspond to 
the elliptic curve $E$ with Weil numbers $q=t=s^2$.
Then  by \eqref{eq:qt-Weil}
$E$ is defined over $\bbF_p$ with $p=q/t=1$.
So in the present situation $E$ 
should be ``defined over $\bbF_1$".
Thus some theory of schemes over $\bbF_1$ will be needed.

The purpose of this paper is to give 
a construction of $\EH_{q,q}$ as the Hall algebra of 
``elliptic curve of $\bbF_1$".
Precisely speaking, we argue
\begin{description}
\item[B1]
Build a category $\catB$ using 
the \emph{monoidal Tate curve} $\wh{E}$.
$\wh{E}$ is a monoidal scheme over the formal monoidal scheme 
$\Spf\mnd{q}$,
which is seen as the space of the parameter $q$.
The category $\catB$ will be 
considered as the category of 
``coherent shaves over $E/\bbF_1$".

\item[B2]
The category $\catB$ is not abelian, and it is not 
even additive.
However it is a belian and quasi-exact category 
in the sense of Deitmar \cite{D:bel,D:qext}.
Then Szczesny's construction  of Hall algebra \cite{Sz}
can be applied, and we have an algebra $\DHall(\catB)$.

\item[B3]
Check $\DHall(\catB) \simeq \EH_{q,q}$.
\end{description}
The step \textbf{B3}, or Theorem \ref{thm:main},
is the main result of this note.

The steps \textbf{B1}--\textbf{B3} form the ``B-side"
of our strategy of the proof of Morton-Samuelson isomorphism
in terms of homological mirror symmetry.
For the completeness of explanation,
let us explain the remained ``A-side".
\begin{description}
\item[A1]
Build another category $\catA$ associated to a torus, 
which may be seen as the ``Fukaya category of tori over $\bbF_1$".
$\catA$ should have a parameter $s$ parametrizing 
the symplectic form, or the area form, on the torus.
\item[A2]
Consider Hall algebra $\Hall(\catA)$
\item[A3]
Check $\DHall(\catA)\simeq \Sk(\torus)$.
\end{description}

The Morton-Samuelson isomorphism will be the consequence of
\begin{description}
\item[HMS]
Show $\catA \simeq \catB$.
\end{description}
One may prove the step \textbf{HMS} by 
taking an appropriate $\bbF_1$-analogue of 
the mirror symmetry for torus/elliptic curve 
over $\bbZ$, shown by \cite{Gross,LP}.

\section{Hall algebra in the monoidal setting}
\label{sect:Hall}

In this section we explain Szczesny's definition of 
Hall algebra for monoid representation
and restate it in the setting of 
quasi-exact and belian category 
in the sense of Deitmar.

\subsection{Category of modules over commutative monoid}
\label{subsec:A-mod}

Let $A$ be a (multiplicative) commutative monoid with 
the absorbing element $0$, i.e.,
$a 0 = 0 a = 0$ for any $a \in A$.

An $A$-module is a pointed set $(M,*)$ with 
$A$-action $\cdot: A \times M \to M$ 
such that $0 \cdot m = *$ for any $m \in M$.
We will denote $M$ for $(M,*)$ for simplicity.

Morphisms, submodules, images 
are defined as in the case of modules over a group.

Let $M$ be an $A$-module and $N \subset M$ be a sub $A$-module.
The quotient $M/N$ is defined to be the module 
with the underlying pointed set 
$(M\setminus N)\sqcup \{*\}$.

We denote by $A$-$\modc$ the category of $A$-modules.
It is not abelian since 
$\coim(f) \to \im(f)$ 
is not an isomorphism in general.
It is not even additive.

$A$-$\modc$ has the zero object $\{*\}$
in the sense that for every $A$-module $M$
there exists a unique $A$-morphism $M \to \{*\}$ 
and a unique $\{*\} \to M$.
We will denote $\{*\}$ by $0$ hereafter.
Now we have the notion of kernels and cokernels of morphisms.

For $A$-modules $M$ and $N$,
is defined to be 
$M \sqcup N/\sim$ with $\sim$ 
identifying base-points.
Here $\sqcup$ denotes the (set-theoretic) disjoint sum.
Then $M \oplus N$  is the coproduct in the categorical sense.

The product 
$M \times N$ in the categorical sense 
is defined to be the set-theoretic product with diagonal action.

One can also check that 
$A$-$\modc$ has finite limits and colimits.


Let us state a property of $A$-$\modc$ 
which is necessary to prove the associativity
of Hall algebra.

\begin{lem}\label{lem:noether}
For an $A$-module $M$ and its sub $A$-module $N \subset M$,
there is an inclusion-preserving correspondence between 
$A$-modules $N \subset L \subset M$ 
and sub $A$-modules of $M/N$ given by $L \mapsto L/N$.
\end{lem}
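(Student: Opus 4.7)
The plan is to mirror the classical correspondence (lattice) theorem for modules over a ring, carried out directly in terms of the projection and its preimage. I would begin by making the quotient map $\pi : M \to M/N$ explicit: as a map of pointed sets, $\pi$ sends every element of $N$ to the base-point $*$ and is the identity on $M\setminus N$. With the $A$-action on $M/N$ defined as in \S\ref{subsec:A-mod} (namely $a\cdot[m]=[am]$ if $am\notin N$, and $*$ otherwise), one checks directly that $\pi$ is an $A$-morphism.

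The candidate bijection is then furnished in both directions by $L \longmapsto \pi(L) = L/N$ for $N \subset L \subset M$, and $K \longmapsto \pi^{-1}(K)$ for $K$ a sub $A$-module of $M/N$. Both assignments preserve inclusion by construction. The items to verify are: (i) $\pi(L)$ is indeed a sub $A$-module of $M/N$, using $A$-equivariance of $\pi$ together with $* \in \pi(L)$ (which holds since $N \subset L$ and $\pi(N)=\{*\}$); (ii) $\pi^{-1}(K)$ is a sub $A$-module of $M$ containing $N$, again from $A$-equivariance of $\pi$ and $* \in K$; (iii) the two assignments are mutually inverse, which reduces to the set-theoretic identities $\pi^{-1}(\pi(L)) = L$ (because $\pi^{-1}(*) = N \subset L$ and $\pi$ is bijective on $M \setminus N$) and $\pi(\pi^{-1}(K)) = K$ (from surjectivity of $\pi$).

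The main point requiring care — and really the only non-formal step — is that, although $A$-$\modc$ is neither abelian nor additive, the notion of sub $A$-module in this category coincides with the evident set-theoretic notion, and the image of a sub $A$-module under $\pi$ is literally its set-theoretic image; no coimage/image distinction needs to be negotiated. Once this is granted, the remaining verifications are purely set-theoretic and follow from the definition of $M/N$ as $(M\setminus N) \sqcup \{*\}$, so I expect the proof to be short and essentially bookkeeping.
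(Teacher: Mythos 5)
Your proof is correct, and it is the natural argument. A remark first: the paper itself states Lemma~\ref{lem:noether} without proof (it is imported from the $A$-$\modc$ setting of Szczesny~\cite{Sz} and treated as a standard fact), so there is no in-paper proof to compare against. Your argument is the expected one: describe $\pi:M \to M/N$ explicitly as the map collapsing $N$ to $*$ and acting identically on $M\setminus N$, then give the two-sided inverse $K \mapsto \pi^{-1}(K)$ and check by hand that both directions land in submodules, preserve inclusion, and compose to the identity. Your observation that the only place one must pause is whether ``sub $A$-module'' and ``set-theoretic image under $\pi$'' behave as naively as in the classical module setting is well taken; since $\pi$ is a normal morphism ($\coim\pi \simto \im\pi \simeq M/N$), the set-theoretic image and the categorical image agree, and the rest is bookkeeping on the pointed set $(M\setminus N)\sqcup\{*\}$ exactly as you say. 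One small thing you could make explicit, though it is implicit in your setup: the case split in checking that $\pi^{-1}(K)$ is closed under the $A$-action needs the branch where $m\notin N$ but $am\in N$, which is handled because $N\subset\pi^{-1}(K)$; you gesture at this with ``$\pi^{-1}(*)=N\subset L$'' but stating it in the closure check would make the proof airtight.
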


\subsection{Hall algebra of monoid representations}

Let $A$ be a commutative monoid 
as in the previous subsection.
Since $A$-$\modc$ is not abelian,
one cannot expect to construct Hall algebra.
The idea of Szczesny \cite{Sz} is to restrict 
the class of morphisms 
so that we have a good family of exact sequences.

\begin{dfn}\label{dfn:normal}
A morphism $f$ in $A$-$\modc$ is called 
\emph{normal} 
if $\coim(f) \simto \im(f)$.
Denote by $A$-$\modc^n$
the subcategory of $A$-$\modc$ 
with the same objects 
and only normal morphisms. 
\end{dfn}

Now in the category $A$-$\modc$ 
we have the notion of exact sequence consisting 
of normal morphisms,
and can construct Hall algebra as in the abelian category 
(recall \S\ref{subsec:intro:Hall}).

\begin{fct}[{Szczesny, \cite[\S3]{Sz}}]
\label{fct:Hall:Sz}
Denote by $\Iso(A\text{-}\modc^n)$
the set of isomorphism classes of objects in $A$-$\modc^n$.
Then the $\bbQ$-vector space
\begin{equation}\label{eq:Hall:Amod}
 \{f:\Iso(A\text{-}\modc^n) \to \bbQ \mid 
   \#\supp(f)<\infty\}
\end{equation}
is a bialgebra with the multiplication $*$ 
and the comultiplication $\Delta$ defined as 
\[
 f*g([M]):=\sum_{N \subset M}f([M/N])g([N]), \quad 
 \Delta(f)([M],[N]) := f([M \otimes N]).
\]
Here we denoted by $[M]$ the class of the object $M$.
\end{fct}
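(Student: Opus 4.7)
The plan is to imitate the classical Ringel-Hall construction \cite{Ringel, Green}, using the normal morphisms of $A\text{-}\modc^n$ as a surrogate for the missing abelian structure. The ingredients needed for that construction are: (i) a zero object together with finite (co)products, (ii) a well-behaved notion of short exact sequence, and (iii) a Noether-type correspondence between submodules of $M/N$ and intermediate $A$-modules $L$ with $N \subset L \subset M$. Item (i) was supplied in \S\ref{subsec:A-mod}, item (ii) follows from Definition \ref{dfn:normal} (the condition $\coim \simto \im$ makes $N \inj M \surj M/N$ behave as a kernel-cokernel pair), and item (iii) is precisely Lemma \ref{lem:noether}. Thus the scaffolding for the classical argument is already in place.

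First I would check that the convolution $f * g$ lies in the space \eqref{eq:Hall:Amod}. For each $[M]$ the sum $\sum_{N \subset M} f([M/N]) g([N])$ is finite (a pointed set of given cardinality carries only finitely many sub-pointed-sets), and the finite-support hypothesis on $f,g$ bounds the cardinality of any $M$ contributing, via $|M| = |N| + |M/N| - 1$. Associativity of $*$ is then a direct transcription of the classical proof: expand both $(f*g)*h([M])$ and $f*(g*h)([M])$ as triple sums, and apply Lemma \ref{lem:noether} to reindex each as a sum over flags $0 \subset N_1 \subset N_2 \subset M$, at which point the two expressions manifestly coincide. Coassociativity of $\Delta$ is immediate from associativity of the monoidal product $\otimes$ in $A\text{-}\modc$.

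The main obstacle will be the bialgebra compatibility $\Delta(f * g) = \Delta(f) * \Delta(g)$. In the abelian setting this is Green's formula, which requires a hereditary hypothesis; here I expect the rigidity of the $\bbF_1$-setting to replace that machinery. Concretely, a submodule $K$ of the coproduct $M \oplus N$ (the wedge of pointed sets at the basepoint) decomposes canonically as $(K \cap M) \oplus (K \cap N)$, since the two summands share only the basepoint, and any $A$-action preserves each component. This rigidity means no nontrivial extensions can spoil the count, and the required identity should reduce to a clean finite-set bijection between submodules of $M \otimes N$ and compatible pairs of submodules of $M$ and of $N$. The only technical point I anticipate is the bookkeeping of the quotients $(M \otimes N)/K$ against $(M/K_M) \otimes (N/K_N)$ along this decomposition, which should follow from a diagram chase within $A\text{-}\modc^n$ rather than a genuinely new idea.
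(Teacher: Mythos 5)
The paper does not prove this Fact itself --- it is cited from Szczesny \cite{Sz}, and the only gloss the paper offers is that associativity is obtained from Lemma \ref{lem:noether}. Your handling of finiteness and of associativity via Lemma \ref{lem:noether} is exactly the route the paper indicates, and the reindexing over flags $0\subset N_1\subset N_2\subset M$ is the standard step. For the bialgebra compatibility $\Delta(f*g)=\Delta(f)*\Delta(g)$, your proposal genuinely diverges from the machinery the paper later invokes in its general Theorem for finitary hereditary belian quasi-exact categories, which goes through Green's argument and hence needs a hereditary hypothesis. You instead observe that every sub-$A$-module $K$ of the wedge $M\oplus N$ decomposes canonically as $(K\cap M)\oplus(K\cap N)$, with $(M\oplus N)/K \simeq \bigl(M/(K\cap M)\bigr)\oplus\bigl(N/(K\cap N)\bigr)$; this gives the compatibility directly, with no Euler-form twist and no homological assumption, and it is precisely the mechanism behind Szczesny's proof. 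For $A$-$\modc^n$ this is more transparent than routing through the general quasi-exact machinery. One small consistency note: the Fact as printed writes $f([M\otimes N])$ whereas \eqref{eq:Hall:comul} writes $f([M\oplus N])$; your reading of the comultiplication as the coproduct (wedge) $\oplus$ is the intended one, so your passing remark deriving coassociativity from ``associativity of the monoidal product $\otimes$'' should be rephrased in terms of $\oplus$.
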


As mentioned before, the associativity is proved using
Lemma \ref{lem:noether}.

\begin{dfn}\label{dfn:Hall:Sz}
We denote this bialgebra by $\Hall(A\text{-}\modc^n)$.
\end{dfn}

\subsection{Hall algebra for quasi-exact category}

According to \cite{Sz},
the category $A$-$\modc$ is an example of 
quasi-exact and belian category in the sense of Deitmar
\cite{D:qext, D:bel}.
or an example of proto-exact category in the sense of 
Dyckerhoff-Kapranov.
Let us restate the result in the previous subsection 
in such a categorical setting as we need in the latter argument.

\begin{dfn}
Let $\catC$ be a small category.
\\
(1)
$\catC$ is called \emph{balanced}
if every morphism
which is both monomorphism and epimorphism 
is an isomorphism.
\\
(2)
$\catC$ is called \emph{pointed} if it has an object $0$
such that for every object $X$ the sets $\Hom_{\catC}(X,0)$ 
and $\Hom_{\catC}(0,X)$
of morphisms have exactly one element respectively.
Such $0$ is unique up to unique isomorphism,
and called the \emph{zero object}.
The unique element in $\Hom_{\catC}(X,0)$ 
and $\Hom_{\catC}(0,X)$ are called the \emph{zero morphism}.
\end{dfn}

For a pointed category, 
one can introduce  the notion of kernels and cokernels
using the zero morphism.

\begin{dfn}[{Deitmar \cite{D:bel}}]
A \emph{belian} category is a balanced pointed category 
which has  finite products, kernels and cokernels,
and has the property that every morphism with zero cokernel 
is an epimorphism.
\end{dfn}

For a commutative monoid $A$,
the category $A$-$\modc$ is an example belian category.
In fact, one can check easily that 
$A$-$\modc$ is balanced, and the other axioms are already 
discussed in \S\ref{subsec:A-mod}.  

Next we turn to quasi-exact categories.
Let $\catC$ be a balanced pointed category.
As usual, we have the notion of short exact sequence
\[
 0 \to X \to Y \to Z \to 0
\] 
of morphisms in $\catC$.
We will call $X \to Y$ the kernel 
and $Y \to Z$ the cokernel of the short exact sequence.

\begin{rmk*}
In \cite{D:qext,D:bel} 
Deitmar called our exact sequences 
strong exact sequences.
\end{rmk*}


\begin{dfn}[{Deitmar \cite{D:qext}}]
A quasi-exact category is a balanced pointed category $\catC$
together with a class $\calE$ of short exact sequences 
such that 
\begin{itemize}
\item
for any two objects $X,Y$, the natural sequence 
$0 \to X \to X \oplus Y \to Y \to 0$ 
belongs to $\calE$,
\item
kernels in $\calE$ is closed under composition 
and base-change by cokernels in $\calE$,
\item
cokernels in $\calE$ is closed under composition 
and base-change by kernels in $\calE$.
\end{itemize}
\end{dfn}

In the construction of Szczesny's Hall algebra,
we considered the category $A$-$\modc^n$ of $A$-modules 
with normal morphisms.
One can check that $A$-$\modc^n$ together with 
short exact sequences consisting of normal morphisms
is a quasi-exact category.

Now we can restate Szczesny's result (Fact \ref{fct:Hall:Sz}).

\begin{dfn}
A category $\catC$ is called finitary if
for any two objects $X$ and $Y$,
the set $\Hom_{\catC}(X,Y)$ is finite
\end{dfn}

Let us denote a kernel $N \inj M$ in a quasi-exact category 
by $N \subset M$.

\begin{prop}
Let $\catC$ be a finitary, belian and quasi-exact category
with the class $\calE$ of short exact sequences.
Then the $\bbQ$-vector space
\begin{equation}\label{eq:Hall:space}
\{f:\Iso(\catC) \to \bbQ \mid  \#\supp(f)<\infty\}
\end{equation}
is a unital associative algebra with the multiplication
\begin{equation}\label{eq:Hall:mul}
f*g([M]):=\sum_{N \subset M}f([M/N])g([N]),
\end{equation}
with $[M]$ the class of the object $M$.
\end{prop}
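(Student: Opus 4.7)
The plan is to adapt the standard Ringel--Hall associativity argument to the quasi-exact belian setting; the point will be that the axioms of a quasi-exact category are designed precisely so that the classical proof goes through without essential change. The role played by Lemma \ref{lem:noether} in Szczesny's original construction will here be taken by the stability of kernels and cokernels in $\calE$ under base-change, which delivers the required analogue of the third isomorphism theorem.

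\emph{Well-definedness and unit.} Fix $M\in\catC$ and $f,g$ of finite support. The sum in \eqref{eq:Hall:mul} a priori ranges over all kernels $N\subset M$ in $\calE$, but only those with $[N]\in\supp(g)$ and $[M/N]\in\supp(f)$ contribute. For each such pair of isomorphism classes $([A],[B])$ the set of $N\subset M$ with $[N]=[A]$ is finite, because such an $N$ is in particular given by a monomorphism $A\to M$ and $\Hom_{\catC}(A,M)$ is finite by the finitary hypothesis; a standard counting argument then also bounds $\supp(f*g)$. The unit will be $1:=\delta_{[0]}$, where $0$ is the zero object. Applying the first axiom of a quasi-exact category with $X=0$ or $Y=0$ shows that $0\subset M$ and $M\subset M$ are the unique kernels in $\calE$ whose subobject or quotient is zero, from which $1*f=f*1=f$ follows immediately.

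\emph{Associativity.} Expanding both sides gives
\begin{align*}
(f*g)*h\,([M]) &= \sum_{L\subset M}\sum_{K\subset M/L} f\bigl([(M/L)/K]\bigr)\,g([K])\,h([L]),\\
f*(g*h)\,([M]) &= \sum_{N\subset M}\sum_{L\subset N} f([M/N])\,g([N/L])\,h([L]),
\end{align*}
so it suffices to produce a bijection between pairs $(L\subset M,\, K\subset M/L)$ and chains $L\subset N\subset M$ of kernels in $\calE$, under which $K\simeq N/L$ and $(M/L)/K\simeq M/N$. The bijection is the expected one: from a chain one takes $K:=N/L$, which is a kernel of $M/L$ because cokernels in $\calE$ are stable under base-change by kernels; conversely, from $(L,K)$ one forms $N$ as the pullback of $K\inj M/L$ along the cokernel $M\surj M/L$, which is a kernel of $M$ by the dual stability axiom. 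The identification $(M/L)/K\simeq M/N$ is then forced by the universal property of cokernels in the balanced pointed category $\catC$.

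The main obstacle I anticipate is this last identification. In an abelian category it is the third isomorphism theorem and falls out of the snake lemma, but in our belian, non-additive setting one must check directly that the constructions above actually land in $\calE$, that the required pullback exists and coincides with the one computed via kernels, and that the resulting isomorphism $(M/L)/K\simeq M/N$ is canonical. The key inputs are precisely the three quasi-exact axioms, together with the property that in a belian category every morphism with zero cokernel is an epimorphism, which forces uniqueness of the induced maps. All of this is developed in Deitmar's foundational work \cite{D:qext,D:bel}, whose isomorphism theorems we intend to invoke rather than reprove.
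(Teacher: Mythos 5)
Your proof is correct and follows essentially the same route as the paper: both reduce associativity to an analogue of the third isomorphism theorem (the bijection between chains $L\subset N\subset M$ and pairs $(L\subset M,\,K\subset M/L)$ with $K\simeq N/L$, $(M/L)/K\simeq M/N$), which the paper packages as a $3\times 3$ exact-diagram lemma proved by ``standard diagram chasing'' and you attribute to the quasi-exact stability axioms plus Deitmar's isomorphism theorems. The extra detail you supply on well-definedness, the unit, and the pullback construction of $N$ is consistent with (and somewhat more explicit than) the paper's terse treatment.
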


The only non-trivial point is 
the associativity of the multiplication,
which is proved by using Lemma \ref{lem:noether}.
Let us restate it in the present context.
For a quasi-exact category $\catC$ with the class $\calE$
of short exact sequences, we denote a kernel $X \to Y$ 
appearing in a sequence of $\calE$ by $X \inj Y$,
and denote a cokernel $Y \to Z$ 
appearing in a sequence of $\calE$ by $Y \surj Z$.

\begin{lem}
Let $\catC$  a belian and quasi-exact category.
Suppose that we are given a commutative exact diagram 
with solid arrows in the next line.
\[
 \xymatrix{
  L \ar@{^{(}->}[r] \ar@{=}[d] & 
  M \ar@{->>}[r]    \ar@{^{(}->}[d] & 
  X \ar@{^{(}.>}[d] \\
  L \ar@{^{(}->}[r] &
  N \ar@{->>}[r]    \ar@{->>}[d] &  
  Y \ar@{.>>}[d] \\
  & Z \ar@{=}[r] & Z
 }
\]
Then there are dotted morphisms making the whole diagram 
commutative and exact.
\end{lem}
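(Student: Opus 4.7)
The plan is to construct the two dotted arrows using universal properties of the cokernels appearing in the top row and in the middle row, and then to verify that the resulting right-most column is a short exact sequence in $\calE$.

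For the dotted $\iota\colon X \inj Y$, I would observe that the composite $L \inj M \inj N \surj Y$ equals $L \inj N \surj Y$ by commutativity of the upper-left square, and this vanishes by exactness of the middle row. Hence $M \inj N \surj Y$ kills $L$, so by the universal property of the cokernel $M \surj X$ of the top row it factors uniquely through $X$, producing $\iota$. Symmetrically, the composite $L \inj M \inj N \surj Z$ vanishes because $M \inj N \surj Z$ does (by exactness of the middle column), so $L \inj N \surj Z = 0$, and $N \surj Z$ factors uniquely through the cokernel $N \surj Y$ to give $\pi\colon Y \to Z$. Commutativity of the whole $3\times 3$ diagram is then checked by post-composing with the epimorphisms $M \surj X$ and $N \surj Y$.

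The substantive step is to show that $X \inj Y \surj Z$ actually belongs to $\calE$. The map $\pi$ is epi because the epimorphism $N \surj Z$ factors as $N \surj Y \xrightarrow{\pi} Z$. A standard universal-property manipulation, using that $M$ is the kernel of $N \surj Z$ in the middle column and that $\iota$ was defined via the cokernel $M \surj X$, identifies $\pi$ with the categorical cokernel of $\iota$. To upgrade this identification to membership in $\calE$, I would realize the right column as arising from the middle column by base-change: concretely, view $N \surj Y$ as the pushout of $M \surj X$ along the kernel $M \inj N$, so that the induced map $X \to Y$ is the base-change of a kernel and $Y \surj Z$ is the base-change of the cokernel $N \surj Z$. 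Deitmar's stability axioms for kernels under base-change by cokernels (and for cokernels under base-change by kernels) then place the right column in $\calE$.

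The main obstacle is precisely this last step: the abstract quasi-exact axioms give only limited machinery for producing new exact sequences, so one must carefully exhibit the correct pushout/pullback square built from the given data. As a reality check, in the principal application $\catC = A\text{-}\modc^n$ of \S\ref{subsec:A-mod} the whole argument is transparent at the set-theoretic level via Lemma~\ref{lem:noether}: one has $X = M/L$, $Y = N/L$, $Z = N/M$, the map $\iota$ sends the class of $m$ to its class in $N/L$, and $\pi$ sends the class of $n$ to its class in $N/M$; the right column then realises the third isomorphism identity $(N/L)/(M/L) \simeq N/M$, confirming exactness in $\calE$. The role of the abstract proof is simply to formalise this transparent picture inside Deitmar's framework.
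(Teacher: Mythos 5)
The paper itself offers no proof of this lemma beyond the single sentence ``The proof of the lemma can be done by a standard diagram chasing,'' so there is no written argument to compare against; your proposal is effectively a fleshed-out version of exactly that. Your construction of $\iota$ and $\pi$ via the universal properties of the cokernels $M\surj X$ and $N\surj Y$ is correct, your identification of $\pi$ as the categorical cokernel of $\iota$ is correct, and the observation that the given square $M\to X$, $M\to N$, $N\to Y$, $X\to Y$ is a pushout (verified directly by checking that any cocone vanishes on $L$) is the right structural insight that makes the quasi-exact stability axiom applicable. This is the same content the paper is gesturing at.

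One small imprecision worth noting: the phrase ``$Y\surj Z$ is the base-change of the cokernel $N\surj Z$'' is vague and not needed. Once you have applied the stability axiom to conclude that $\iota\colon X\to Y$ is a kernel in $\calE$, there must exist (by the definition of the class $\calE$) a short exact sequence $X\inj Y\surj W$ in $\calE$ in which $W$ is, up to unique isomorphism, the categorical cokernel of $\iota$. Since you have already shown $\pi\colon Y\to Z$ realizes that cokernel, membership of $X\inj Y\surj Z$ in $\calE$ follows immediately; no second base-change argument is required for the cokernel leg. With that tightening, the proof is clean and complete, and the final paragraph's sanity check against the third isomorphism theorem in $A$-$\modc^n$ via Lemma~\ref{lem:noether} is exactly the right picture to keep in mind.
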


The proof of the lemma can be done by a standard diagram chasing.

Next we turn to coalgebra and bialgebra structures.
The coalgebra structure is defined in a usual way.

\begin{lem}
Let $\catC$ be a finitary belian quasi-exact category.
The vector space \eqref{eq:Hall:space} has a structure of
counital coassociative coalgebra 
with the comultiplication
\begin{equation}\label{eq:Hall:comul}
\Delta(f)([M],[N]):=f([M \oplus N]).
\end{equation}
\end{lem}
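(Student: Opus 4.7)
The plan is to verify, in order, (i) that $\Delta$ defined by \eqref{eq:Hall:comul} genuinely maps the space \eqref{eq:Hall:space} (call it $H$) into $H \otimes H$, (ii) coassociativity of $\Delta$, and (iii) the counit axioms for $\epsilon(f) := f([0])$, where $0$ denotes the zero object of $\catC$.

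For (i), identify $H \otimes H$ with the space of finitely supported functions on $\Iso(\catC) \times \Iso(\catC)$. Since $\Delta(f)([M],[N]) = f([M \oplus N])$ is nonzero only if $[M \oplus N] \in \supp(f)$, which is finite, it suffices to show that for each fixed $[K] \in \Iso(\catC)$ only finitely many isomorphism classes of pairs $([M],[N])$ satisfy $M \oplus N \simeq K$. Such a decomposition is determined, up to isomorphism, by the split idempotent $K \surj M \inj K$ in $\mathrm{End}_{\catC}(K)$, and this monoid is finite by the finitary hypothesis; in the motivating example $\catC = A\text{-}\modc^n$ this can equivalently be seen from the fact that a finite pointed set has only finitely many sub-pointed-sets.

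For (ii), evaluating both $(\Delta \otimes \mathrm{id}) \circ \Delta(f)$ and $(\mathrm{id} \otimes \Delta) \circ \Delta(f)$ on a triple $([M],[N],[P])$ yields
\[
 f([(M\oplus N)\oplus P])
 \qquad \text{and} \qquad
 f([M\oplus (N\oplus P)])
\]
respectively, which coincide because the biproduct $\oplus$ furnished by the quasi-exact axioms is associative up to canonical isomorphism. For (iii), a direct computation gives $((\epsilon \otimes \mathrm{id}) \circ \Delta)(f)([M]) = \Delta(f)([0],[M]) = f([0 \oplus M]) = f([M])$, using $0 \oplus M \simeq M$; the right counit axiom is symmetric.

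The only step requiring any real categorical input is the well-definedness in (i); coassociativity and the counit axioms are then formal consequences of the associativity and unitality of the biproduct $\oplus$ in $\catC$. I do not anticipate any diagram chase comparable to the one used for associativity of the multiplication.
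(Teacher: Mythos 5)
Your proof is correct, and since the paper offers no explicit argument for this lemma (it only notes that ``the coalgebra structure is defined in a usual way''), your write-up is a reasonable reconstruction of what is intended. The three-part structure — well-definedness of $\Delta$ as a map into $H\otimes H$, coassociativity from associativity of $\oplus$, and the counit $\epsilon(f)=f([0])$ with $0\oplus M\simeq M$ — is exactly the standard route, and you correctly identify well-definedness as the only step where the finitary hypothesis actually enters. The idempotent argument works: for a decomposition $K\simeq M\oplus N$ the coproduct's universal property gives $p_M\colon K\to M$ with $p_M i_M=\mathrm{id}_M$ and $p_M i_N=0$, so $e=i_M p_M\in\mathrm{End}_{\catC}(K)$ is idempotent, and $e$ recovers the pair $([M],[N])$ as $([\im e],[\mathrm{coker}(i_M)])$; finiteness of $\mathrm{End}_{\catC}(K)$ then bounds the number of pairs.

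One terminological caution: you call $\oplus$ the ``biproduct furnished by the quasi-exact axioms,'' but in this non-additive setting $\oplus$ is only the coproduct (the wedge sum in $A\text{-}\modc$), while the categorical product is the genuinely different set-theoretic product with diagonal action. This does not affect your argument, which only uses associativity and unitality of the coproduct, but the phrase ``biproduct'' would be misleading to a reader of this paper, where the product/coproduct distinction is emphasized. It would also be worth a sentence to note that the paper never names the counit; introducing $\epsilon(f):=f([0])$ is your (correct) addition.
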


Now to construct a bialgebra, we need the hereditary condition
on our category $\catC$.

\begin{dfn}
A belian quasi-exact category $\catC$ is called hereditary
if $\Ext_{\catC}^n(X,Y)=0$ for any objects $X,Y$ 
and any $n \in \bbZ_{\ge2}$.
\end{dfn}

Here the higher extension $\Ext^n$ is defined 
by the derived functor of $\Hom$.
The theory of derived functors for a belian category 
is given in \cite[\S1.6]{D:bel}, and we will not repeat it.

Now the argument \cite{Green} 
(see also \cite[\S1.5]{Sc:lect})
on the Hall bialgebra for abelian category 
can be applied to our setting, and we have

\begin{thm}
Let $\catC$ be a belian quasi-exact category
which is finitary and hereditary.
The vector space \eqref{eq:Hall:space} 
has a structure of bialgebra
with the multiplication \eqref{eq:Hall:mul}
and the comultiplication \eqref{eq:Hall:comul}.
\end{thm}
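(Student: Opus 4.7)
The plan is to adapt Green's classical proof for hereditary abelian categories, since the preceding proposition and lemma already supply the algebra and coalgebra structures on the space \eqref{eq:Hall:space}. What remains is the bialgebra compatibility, which in Ringel--Hall theory takes the form $\Delta(f * g) = \Delta(f) * \Delta(g)$, where the right-hand side uses the twisted multiplication on the tensor square by the Euler form.

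First I would expand both sides on the basis of characteristic functions $\delta_{[M]}$. Writing $g^{M}_{X,N}$ for the number of kernels $N \inj M$ in $\calE$ with cokernel isomorphic to $X$, the multiplication reads $\delta_{[X]} * \delta_{[N]} = \sum_{[M]} g^{M}_{X,N} \delta_{[M]}$, while the finitary hypothesis makes $\Delta(\delta_{[M]})$ a finite sum indexed by direct sum decompositions of $M$. The compatibility then reduces to a purely combinatorial identity among these structure constants, indexed by four test objects $(A_1, A_2, B_1, B_2)$.

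Second, I would transcribe Green's \emph{butterfly} argument: to a pair consisting of an exact sequence $0 \to N \to M \to X \to 0$ in $\calE$ and a direct sum splitting $M = A \oplus B$, one associates a nine-term commutative diagram whose rows and columns lie in $\calE$, the four outer corners being $N \cap A$, $N \cap B$, $A/(N \cap A)$ and $B/(N \cap B)$. The existence of these diagrams, and the pullback/pushout constructions producing the missing corners, is exactly what the three closure axioms of a quasi-exact category were designed to guarantee, together with the parallelogram-type lemma stated just above the theorem, which fills in the central object.

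The main obstacle is the hereditary input. Green concludes by recognizing both sides of the combinatorial identity as cardinalities of $\Ext^1$-torsors, and uses the vanishing of $\Ext^2$ to promote a connecting map in the long exact Ext sequence to a surjection, thereby balancing the counts. To carry this through I would invoke Deitmar's derived functors for belian categories \cite[\S1.6]{D:bel} to define $\Ext^n_{\catC}(X,Y)$ relative to $\calE$, check that a short exact sequence in $\calE$ induces a long exact sequence of Ext groups, and apply the hypothesis $\Ext^{\geq 2} = 0$. Once this homological apparatus is confirmed to behave as in the abelian case, Green's numerical identity transfers verbatim and yields the required compatibility.
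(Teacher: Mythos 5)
Your proposal shares the paper's stated strategy—the paper dismisses the theorem in one sentence by appealing to Green's argument—but there is a genuine mismatch between Green's setup and the comultiplication \eqref{eq:Hall:comul} that the proposal does not address, and which I think would sink the butterfly route as you've outlined it.

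Green's theorem establishes compatibility of the Hall multiplication with the comultiplication \emph{dual} to it, i.e.\ the one whose structure constants count all short exact sequences $0 \to N \to M \to Q \to 0$ weighted by automorphism group sizes. The comultiplication \eqref{eq:Hall:comul} is instead the ``split'' one, $\Delta(f)([M],[N]) = f([M \oplus N])$, indexing direct sum decompositions only. These are not the same, and the split comultiplication is \emph{not} compatible with the Hall multiplication for a generic hereditary finitary abelian category: already for $\bbF_p$-vector spaces one has $\delta_1 * \delta_1 = (p+1)\delta_2$, so the $\delta_1\otimes\delta_1$ coefficient of $\Delta(\delta_1*\delta_1)$ is $p+1$, while that of $\Delta(\delta_1)*\Delta(\delta_1)$ is $2$. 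No twist by the Euler form, and no amount of $\Ext^2$-vanishing, repairs this in the abelian world; it only balances at $p=1$. Thus the combinatorial identity your step one derives from Green's argument is not the identity the theorem actually needs, and the butterfly/nine-term lemma as Green uses it establishes the wrong compatibility.

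The compatibility that actually holds in the monoidal setting, and the one Szczesny proves for $A$-$\modc^n$ in Fact \ref{fct:Hall:Sz}, rests on a combinatorial feature with no abelian analogue: for a normal submodule $N$ of a direct sum $C\oplus D$ of $A$-modules one has $N = (N\cap C)\oplus(N\cap D)$, since each nonzero element of the coproduct lives in exactly one summand and the $A$-action preserves each. This gives a bijection between pairs (exact sequence with middle term $M$, splitting $M\cong C\oplus D$) and quadruples of exact sequences into the pieces, which is precisely the identity $\Delta(f*g)=\Delta(f)*\Delta(g)$ requires. No hereditary input enters. The correct proof of the theorem is therefore a transfer of Szczesny's decomposition argument to the belian quasi-exact axioms (and one should check, or add as a hypothesis, that the decomposition property holds in $\calE$); the citation of \cite{Green} in the paper is best read as a loose historical gesture rather than as a recipe. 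If you want to retain the hereditary hypothesis in the theorem, you should say where it is actually used—it plays a role later when forming the Drinfeld double, not in the bialgebra compatibility itself.
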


\section{Monoidal schemes, monoidal Tate curve and Hall algebra}

In this section we introduce the monoidal Tate curve $\wh{E}$.
The desired category $\catB$ 
mentioned in \S\ref{subsec:intro:outline}
will be the constructed as a category of sheaves over $\wh{E}$.
Our monoidal Tate curve is a counter part of
the standard Tate curve in the $\bbF_1$-scheme setting.
We will use Deitmar's theory \cite{D} of monoid schemes 
as $\bbF_1$-scheme theory.

\subsection{Monoidal schemes}

We follow Deitmar \cite{D} and use \emph{monoidal schemes}
as the theory of schemes  over $\bbF_1$.

For a commutative monoid $A$,
an ideal $\frka$ is a subset such that $\frka A \subset \frka$.
One can define prime ideals and localizations 
as in the commutative ring case.

Now an affine monoidal scheme 
$\Spec^{\mon}(A)$ is the set of prime 
ideals of $A$ with the Zariski topology.
A monoidal schemes is a topological space $X$ 
with a sheaf 
$\shO^{\mon}_X$ of monoids,
locally isomorphic to some $\Spec^{\mon}(A)$.

Given a monoidal scheme $X$,
we can define 
$\shO^{\mon}_X$-modules, 
(quasi-)coherent $\shO^{\mon}_X$-modules
and locally free $\shO^{\mon}_X$-modules
in a quite similar was as the usual scheme case. 
We also have the notion of 
a perfect complex of $\shO^{\mon}_X$-modules,
i.e., a cohomologically bounded complex 
of coherent $\shO^{\mon}_X$-modules 
which is locally quasi-isomorphic to a bounded complex 
of locally free $\shO^{\mon}_X$-modules of finite ranks.

Relative notions can also be introduced to monoidal schemes.
In particular,
we have the notion of flat family of monoidal schemes,
i.e., a flat morphism $X \to S$ of monoidal schemes. 
Let us explain the definition of flatness:
A commutative monoid $B$ over another $A$ 
is flat if the tensor product functor 
$B \otimes_A (-): A\text{-}\modc^n \to B\text{-}\modc^n$
is exact.

We also have a functor 
\begin{equation}\label{eq:XtoX_Z}
 X \longmapsto X_{\bbZ}
\end{equation}
mapping a monoidal scheme $X$ to 
a scheme $X_{\bbZ}$ over $\bbZ$.
It is induced by the functor $A \mapsto \bbZ[A]$ from 
a commutative monoid $A$ to the monoidal ring $\bbZ[A]$ of $A$.

\subsection{Monoidal Tate curve}

Now let us recall the Tate curve $\wh{E}_{\text{Tate}}$
in the sense of usual scheme, 
following \cite[\S8.4.1]{Gross} and \cite[\S9.1]{LP}.

Consider the following toric data.
\begin{equation}\label{eq:tate:toric}
\begin{split}
&\rho_i := \bbQ_{\ge0}(i,1) \subset \bbQ^2 \ (i \in \bbZ),
 \quad
 \rho_i^\vee 
 := \{x \in \bbQ^2 \mid \langle x,\rho_i\rangle 
      \subset \bbQ_{\ge 0}\}
  = \{(x_1,x_2) \mid i x_1+x_2 \ge 0 \},
 \\
&\sigma_{i+1/2}^\vee := \rho_i^\vee \cap \rho_{i+1}^\vee.
\end{split}
\end{equation}

For a commutative ring $R$, 
set
$U_{i+1/2}:=\Spec R[\sigma_{i+1/2}^\vee \cap \bbZ^2]$.
These affine schemes glue to give a scheme $E$ over $R$
(see \cite[\S9.1.1]{LP} for the explicit form of this gluing map).
The map
$\sigma_{i+1/2}^{\vee} \to \bbQ$, $(x,y) \mapsto y$
gives a morphism $E_{\text{Tate}} \to \Spec R[q]$.
The Tate curve 
$\wh{E}_{\text{Tate}} \to \Spf R[[q]]$
is a formal thickening of this morphism.

The Tate curve $\wh{E}_{\Tate}$ can be considered 
as a family of elliptic curves over $R$ in the following sense.
$\bbZ$ acts on $E_{\Tate}$ in the way 
that $1 \in \bbZ$ corresponds to 
the matrix $\begin{pmatrix}1&1 \\ 0&1\end{pmatrix}$ 
acting on $(\bbQ^2)^{\vee}$.
Taking $R=\bbC$, we can describe this action on the big 
torus $(\bbC^*)^2 \subset E_{\text{Tate}}$ as 
$(z,q)\mapsto (z q, q)$ with $z,q$ corresponding to 
the basis $(1,0),(0,1)$ of $\bbQ^2$.
If we fix $q \in \bbC \setminus\{0\}$ and consider the fiber 
of the map $E_{\text{Tate}} \to \Spec \bbC[q]$,
then the $\bbZ$-action on this fiber is 
generated by $z \mapsto z q$, 
and the quotient is identified with $\bbC^*/q^{\bbZ}$.

Now we construct the \emph{monoidal Tate curve} $\wh{E}$ 
defined by replacing rings in the 
construction of $\wh{E}_{\text{Tate}}$ by monoids.
So we consider the toric data \eqref{eq:tate:toric}
and set
$U_{i+1/2}^{\mon}:=\Spec^{\mon} 
 \langle \sigma_{i+1/2}^\vee \cap \bbZ^2\rangle$.
Gluing and formal thickening give
$\wh{E} \to \Spf^{\mon}\mnd{q}$,
where
$\Spf^{\mon}\mnd{q}$ is the
(formal) monoidal scheme of $\mnd{q} := q^{\bbN}$.
As mentioned above in the case of $\wh{E}_{\text{Tate}}$,
the monoidal Tate curve $\wh{E}$ has a $\bbZ$-action.

Recall the functor \eqref{eq:XtoX_Z} mapping 
each monoidal scheme to a scheme over $\bbZ$.
By construction we have 
\begin{equation}\label{eq:EtoEtate}
\wh{E}_{\bbZ} \simeq \wh{E}_{\Tate}
\end{equation}
 
For later use we consider the zeta function of $\wh{E}$
using Deitmar's result \cite{D:qext}.
The (local) zeta function of a monoidal scheme $X$
is defined to be
$\zeta_{X}(z):=\exp(\sum_{n\ge1}\#X_{\bbZ}(\bbF_{p^n})z^n/n)$
with $p$ a prime number.
Then by \eqref{eq:EtoEtate} and by the fact 
that the Tate curve is an elliptic curve over $\bbZ$,
we have 
\begin{equation}\label{eq:zeta}
 \zeta_{\wh{E}/\mnd{q}}(z)=\frac{(1-q z)(1-z/q)}{(1-z)^2}.
\end{equation}

\section{Hall algebra of monoidal Tate curve}

We now introduce the category $\catB$ 
outlined in \S\ref{subsec:intro:outline},
and define the Hall algebra associated to it
using the general theory in \S\ref{sect:Hall}.

\subsection{Category of sheaves over monoidal scheme}

We want to consider the category of 
$\shO^{\mon}_{\wh{E}}$-modules, i.e. 
sheaves over the monoidal Tate curve.
An $\shO^{\mon}_{\wh{E}}$-module 
consists of the modules $M_{i+1/2}$
over the commutative monoids 
$\langle \sigma^{\vee}_{i+1/2} \cap \bbZ^2\rangle$
and of the gluing data.

As in the case of the category $A$-$\modc$ 
of monoid representations,
the category of $\shO^{\mon}_{\wh{E}}$-modules 
with normal morphisms
is a belian quasi-exact category.
We can restrict the consideration to 
coherent $\shO^{\mon}_{\wh{E}}$-modules,
and denote by $\Coh_{\wh{E}}$ 
the belian quasi-exact category of 
$\shO^{\mon}_{\wh{E}}$-modules with normal morphisms,
which is obviously finitary and hereditary.

\subsection{Fourier-Mukai transform}

Before introducing some $\shO^{\mon}_{\wh{E}}$-modules,
let us recall the standard facts on the sheaves over
elliptic curves in the usual scheme theory.
Hereafter the word ``a sheaf over a scheme $X$" means 
a $\shO_X$-module,
and the word ``a vector bundle" means a locally free sheaf.

Recall that for a smooth projective curve $C$
over a field  any coherent $\shO_C$-module 
has the Harder-Narasimhan filtration,
and the associated factors are semi-stable sheaves.
Each semi-stable sheaf has 
a Jordan-H\"{o}lder filtration 
and the associated factors are stable sheaves. 
Let us denote by $\Coh^{ss}_C(r,d)$ 
the category of coherent semi-stable $\shO_C$-modules
with rank $r$ and degree $d$. 
Denote also by $\Sky_C(d)$ 
the category of skyscraper sheaves on $C$ of length $d$.
We have $\Sky_C(d)=\Coh^{ss}_C(0,d)$.

In the case of an elliptic curve $C=E$,
the Fourier-Mukai transform gives the equivalence 
of categories
\begin{equation}\label{eq:FMT:S}
 \Coh^{ss}_E(r,d) \simto \Coh^{ss}_E(d,-r)
 \text{ if } d>0,
 \quad 
 \Coh^{ss}_E(r,0) \simto \Sky_E(r) 
 \text{ if } r>0.
\end{equation}
The tensor product functor $- \otimes_{\shO_E} L$
with a degree one line bundle $L$ 
gives the equivalence
\begin{equation}\label{eq:FMT:T}
 \Coh^{ss}_E(r,d) \simto \Coh^{ss}_E(r,d+r).
\end{equation}

These equivalences gives a complete picture of 
the category $\Coh_E$ of coherent sheaves over $E$.
We refer \cite[Chap.\ 3]{BBH} as a nice account.
Let us recall a part of the description of $\Coh_E$.
All stable vector bundles are 
simple semi-homogeneous vector bundles.
Here a semi-homogeneous vector bundle is a sheaf
of the form $\pi_* L$ with some isogeny $\pi: E' \to E$
and a line bundle $L$ on $E'$.

\subsection{Fourier-Mukai transform in the monoidal setting}

Now let us turn to the monoidal Tate curve $\wh{E}$.
The notions of relative locally free sheaves
is well-defined in the monoidal setting.
We also have the notion of relative divisors,
so that the rank and degree of relative sheaves are well-defined.
As in the case of the usual elliptic scheme,
we have the relative Jacobian monoidal scheme 
$\Jac_{\wh{E}/\mnd{q}}$
parametrizing the degree 0 line bundles over $\wh{E}/\mnd{q}$
with the universal family $\calP$ over 
$\wh{E} \times \Jac_{\wh{E}/\mnd{q}}$.
The sheaf $\calP$ is nothing but the Poincar\'{e} bundle.
As in the usual case,
we have a natural isomorphism
$\Jac_{\wh{E}/\mnd{q}} \simeq \wh{E}$.

Thus we have the Fourier-Mukai transform
$\frkS(-):=(\bfR\pi_1)_*(\pi_2^*(-)\otimes \calP)$
with $\pi_i$ the projection from 
$\wh{E} \times \Jac_{\wh{E}/\mnd{q}}$ to $i$-th factor.
Here we used the derived functors in the monoidal scheme setting
\cite{D:bel}.

We also denote by $\frkT(-):= L \otimes (-)$ 
the tensor product functor with 
the relative line bundle of degree $1$ on $\wh{E}/\mnd{q}$.

The functors $\frkS$ and $\frkT$ are derived equivalences,
and those actions 
are similar as in the usual scheme case \eqref{eq:FMT:S}
and \eqref{eq:FMT:T} if we appropriately change the definition 
of the,subcategories $\Coh_{\wh{E}}^{ss}$.
In particular,
the rank and degree of sheaves, denoted by $(r,d)$,
are changed to $(d,-r)$ and $(r,d+r)$ respectively.
In other words,
the group generated by $\frkS$ and $\frkT$ acts on 
the lattice $\bbZ^2$ of ranks and degrees by 
the natural $\SL(2,\bbZ)$ action.

\subsection{The category $\catB$ and Hall algebra}

Let us recall the Fourier-Mukai transform 
$\frkS$ introduced in the previous subsection.
We denote by $\Sky_{\wh{E}/\mnd{q}}(d)$ 
the category of relative skyscraper sheaves with length $d$
and normal morphisms.
We also denote by $\Sky_{\wh{E}/\mnd{q}}$ 
the full subcategory of $\Coh_{\wh{E}/\mnd{q}}$ 
consisting of $\Sky_{\wh{E}/\mnd{q}}(d)$'s 
for all $d \in \bbZ_{\ge0}$ and normal morphisms.

\begin{dfn}
Define $\catB$ to be the subcategory of
$\shO^{\mon}_{\wh{E}}$-modules
consisting of the image of $\Sky_{\wh{E}/\mnd{q}}$ 
under the repetitions of 
$\frkS^{\pm1}$ and $\frkT^{\pm1}$
with normal morphisms.
\end{dfn}

One can apply Szczesny's construction of Hall algebra 
(Definition \ref{dfn:Hall:Sz}) to $\catB$. 
Denote by $\Hall(\catB)$ the resulting algebra.
It is a bialgebra with Hopf pairing,
so we can consider its Drinfeld double,
denoted by $\DHall(\catB)$.

\begin{thm}\label{thm:main}
As associative algebras over $\bbZ[q^{\pm1}]$,
we have the isomorphism
\[
 \DHall(\catB) \simeq \EH_{q,q}.
\]
\end{thm}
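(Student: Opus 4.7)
The plan is to mirror closely the structural derivation by Burban--Schiffmann of the presentation of $\DHall(\Coh_E)$ in Fact \ref{fct:BS}, replacing the classical category of coherent sheaves on an elliptic curve over $\bbF_p$ everywhere by the belian quasi-exact category $\catB$ on the monoidal Tate curve. Three main ingredients are needed: the $\SL(2,\bbZ)$-action on isomorphism classes of $\catB$ induced by $\frkS^{\pm 1}, \frkT^{\pm 1}$; the computation of the torsion part $\Hall(\Sky_{\wh{E}/\mnd{q}})$ using the zeta function \eqref{eq:zeta}; and a direct case-by-case verification of the commutator relation in Fact \ref{fct:BS} in a small number of representative cases.

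First I would index the indecomposable isomorphism classes of $\catB$ by $\bbZ^2 \setminus \{(0,0)\}$: a length-$n$ skyscraper has class $(0,n)$, and $\frkS, \frkT$ act on rank--degree pairs via the standard generators of $\SL(2,\bbZ)$. Since $\SL(2,\bbZ)$ acts transitively on primitive lattice vectors, every $(r,d)$ with $\gcd(r,d) = n$ arises from $(0,n)$ by some word in $\frkS^{\pm 1}, \frkT^{\pm 1}$. This defines Hall-algebra elements $u_{(r,d)} \in \DHall(\catB)$ up to a scalar, which I fix by requiring the generating series for the $\theta_{(0,n)}$'s to match the one in Fact \ref{fct:BS}. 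I would then compute $\Hall(\Sky_{\wh{E}/\mnd{q}})$ by observing that a torsion monoid sheaf on the monoidal Tate curve splits as a direct sum of indecomposables supported at distinct closed points, so $\Hall(\Sky_{\wh{E}/\mnd{q}})$ factors as a restricted tensor product of local commutative Hall algebras indexed by closed points. Using \eqref{eq:zeta} to enumerate closed points after reduction to $\bbZ$, a local computation should identify $\Hall(\Sky_{\wh{E}/\mnd{q}})$ with the vertical subalgebra of $\EH_{q,q}$ generated by $\{u_{(0,n)} : n \geq 1\}$.

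Transporting this identification along the $\SL(2,\bbZ)$-action produces, for each primitive direction $(a,b)$, a commutative subalgebra of $\DHall(\catB)$, which gives the first relation of Fact \ref{fct:BS}. The second relation is checked by reducing, via $\SL(2,\bbZ)$-equivariance, to a finite family of representative pairs $(x,y)$ with $\Delta(0,x,x+y)$ containing no integral points; the commutator $[u_x, u_y]$ in each case amounts to an explicit Hall-algebra product between a twist of a skyscraper and a line bundle, computed by enumerating the normal short exact sequences in $\catB$ and matched against $\pm\theta_{x+y}/\alpha_1$ using the generating series from the previous step. The Drinfeld double construction then delivers the desired $\bbZ[q^{\pm 1}]$-algebra isomorphism $\DHall(\catB) \simeq \EH_{q,q}$.

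The main obstacle is the combinatorial analysis underlying the second step and its reuse in verifying the second relation: although \eqref{eq:zeta} is formally analogous to its classical counterpart, producing the precise structure constants in $\Hall(\Sky_{\wh{E}/\mnd{q}})$ requires a careful determination of which maps between torsion monoid sheaves are normal in the sense of Definition \ref{dfn:normal}, and this is where the $\bbF_1$-specific features of $\catB$ intrude most strongly. Once this enumeration is complete, $\SL(2,\bbZ)$-equivariance propagates the result to all slopes and the remaining verification is largely formal.
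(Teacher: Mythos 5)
Your proposal follows essentially the same strategy as the paper: replace the Burban--Schiffmann derivation wholesale by its monoidal analogue, use the $\SL(2,\bbZ)$-action generated by $\frkS$ and $\frkT$ to organize the slope decomposition, compute the skyscraper subalgebra, and propagate to all directions. The two visible differences are at the skyscraper step and at the final commutator check. For $\Hall(\Sky_{\wh{E}/\mnd{q}})$, you propose the classical route of decomposing into a restricted tensor product of local Hall algebras at closed points and then using the zeta function \eqref{eq:zeta} to enumerate those points; the paper's actual argument (Proposition \ref{prop:csa}) is considerably leaner and more $\bbF_1$-specific: in the monoidal setting \emph{every} short exact sequence of skyscrapers splits, so $\Hall(\Sky_{\wh{E}/\mnd{q}})$ is immediately the polynomial algebra on the $\delta_{S(0,d)}$'s without any local-to-global bookkeeping, and the normalization $(q^{d/2}-q^{-d/2})\delta_{S(0,d)} \mapsto w_{(0,d)}$ is stated explicitly. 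Your appeal to \eqref{eq:zeta} is not wrong but over-engineers a step that degenerates neatly over $\bbF_1$; the zeta function is only used in the paper to motivate that the Tate curve relative to $\mnd{q}$ plays the role of an elliptic curve with Weil numbers $q=t$. For the commutator relation, you propose a direct case-by-case verification by enumerating normal short exact sequences, which would indeed require exactly the careful analysis of normality you flag as the main obstacle; the paper sidesteps this by first showing that the Harder--Narasimhan and Jordan--H\"older filtrations in $\catB$ uniquely determine a vector-space map $\varphi:\EH_{q,q}\to\DHall(\catB)$ on a generating set, and then citing the argument of \cite[\S5]{BS} verbatim to conclude $\varphi$ is an algebra map. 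Your route is more self-contained but also carries more combinatorial burden than the paper actually discharges in print.
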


One remark is in order.
The algebra $\DHall(\catB)$ is defined over $\bbQ$.
But in the following argument 
we show that it has an integral basis and 
can be seen defined over $\bbZ[q^{\pm1}]$.

Our argument is the same as \cite{BS},
except the point that 
we use $\frkS$ and $\frkT$ instead of 
the usual Fourier-Mukai transform
and the tensor product functor with line bundle.
We will not repeat the detailed discussion,
but explain some key steps.

First we consider the grading 
and commutative subalgebras of $\DHall(\catB)$
arising naturally from the geometric setting.
Consider the subalgebra 
generated by the class of objects in 
$\Sky_{\wh{E}/\mnd{q}}$.
We denote by $S(0,d)$ the class of 
the relative skyscraper sheaf 
with length $d$.
Recall also that $\delta_{L}$ 
is the characteristic function of $L$.

\begin{prop}\label{prop:csa}
The subalgebra $\Hall(\Sky_{\wh{E}/\mnd{q}})$ 
of $\DHall(\catB)$ 
is isomorphic to the polynomial algebra with 
infinite variables $\delta_{S(0,d)}$, $d\in\bbZ_{\ge1}$.
\end{prop}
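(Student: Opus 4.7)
The plan is to mirror the classical identification of the Hall subalgebra of torsion sheaves at a closed point of a smooth curve with a polynomial algebra in ``power sum'' generators (Kapranov, Baumann--Kassel, Ringel), but transplanted into Szczesny's $\bbF_1$-monoidal framework. Concretely, I want to classify the iso classes in $\Sky_{\wh{E}/\mnd{q}}$, exhibit the generators $\delta_{S(0,d)}$ for $d \ge 1$, and prove an upper-triangular expansion of their products in the basis indexed by iso classes; commutativity of the generators then reduces to Szczesny's computation of the Hall algebra of nilpotent $\bbF_1[t]$-modules.

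\emph{Step 1 (classification).} The formal base $\Spf^{\mon}\mnd{q}$ has a unique closed point, so a relative skyscraper of length $d$ is set-theoretically supported on finitely many closed points of the special fibre of $\wh{E}$. In the monoidal setting the completed local rings at such points are pointed commutative monoids of type $\mnd{t}$, and their finite-length modules are classified: the unique indecomposable of length $d$ is the nilpotent chain $\{*, m_1, \ldots, m_d\}$ with $t \cdot m_j = m_{j-1}$ and $t \cdot m_1 = *$. Using the gluing data for $\wh{E}$ and the $\bbZ$-action, I would verify that for each $d \ge 1$ there is exactly one iso class $S(0,d)$ of indecomposable relative skyscraper of length $d$. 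A Krull--Schmidt type decomposition in the belian category $\Coh_{\wh{E}}$ then shows every object of $\Sky_{\wh{E}/\mnd{q}}$ is uniquely of the form $S_\lambda := \bigoplus_i S(0,\lambda_i)$ for a partition $\lambda = (\lambda_1 \ge \cdots \ge \lambda_k)$, giving a vector-space basis $\{\delta_{S_\lambda}\}$ of $\Hall(\Sky_{\wh{E}/\mnd{q}})$ indexed by partitions.

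\emph{Step 2 (triangularity and commutativity).} Iterating \eqref{eq:Hall:mul}, the product $\delta_{S(0,\lambda_1)} * \cdots * \delta_{S(0,\lambda_k)}$ equals a sum over iso classes $[M]$ weighted by the number of flags of normal submodules of $M$ with successive quotients $S(0,\lambda_j)$ in the prescribed order. An explicit enumeration of such flags at the local monoidal ring $\mnd{t}$ yields
\[
 \delta_{S(0,\lambda_1)} * \cdots * \delta_{S(0,\lambda_k)}
 = n_\lambda\,\delta_{S_\lambda} + \sum_{\mu \triangleleft \lambda} c_{\lambda,\mu}\,\delta_{S_\mu},
\]
with the sum over partitions $\mu$ strictly dominated by $\lambda$ and $n_\lambda$ a positive integer counting refinements of $S_\lambda$ into a complete flag of the given composition type. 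By Szczesny's computation of the Hall algebra of nilpotent $\bbF_1[t]$-modules (which in our local picture is $\Hall$ of the torsion modules at a point), this subalgebra is isomorphic to the algebra of symmetric functions, with $\delta_{S(0,d)}$ playing the role of a power-sum-like generator; in particular the generators pairwise commute in $\Hall(\Sky_{\wh{E}/\mnd{q}}) \subset \DHall(\catB)$.

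\emph{Step 3 (conclusion).} From commutativity the assignment $p_d \mapsto \delta_{S(0,d)}$ extends to an algebra homomorphism $\bbZ[q^{\pm 1}][p_1, p_2, \ldots] \to \Hall(\Sky_{\wh{E}/\mnd{q}})$; the upper-triangular expansion and induction on the dominance order (base case $\delta_{S_{(d)}} = \delta_{S(0,d)}$) show surjectivity, while the nonzero diagonal entries $n_\lambda$ give injectivity. The hard part I expect is Step 1: one must verify that the monoidal local structure, together with the formal thickening over $\mnd{q}$ and the $\bbZ$-action, does not produce extra iso classes of indecomposable skyscrapers beyond those indexed by length, and does not create nontrivial automorphism groups that would split a single length class into several. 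Once this classification is secured, Steps 2 and 3 reduce to standard Hall-algebra combinatorics already developed in Szczesny's framework.
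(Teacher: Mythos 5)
Your proposal and the paper's proof diverge on the key mechanism. The paper's argument is terse: it asserts that all short exact sequences in $\Sky_{\wh{E}/\mnd{q}}$ split, from which commutativity of $\Hall(\Sky_{\wh{E}/\mnd{q}})$ is immediate, and then observes that the $\delta_{S(0,d)}$ generate. You instead carry out the classical Steinitz--Hall route: classify iso classes, prove upper triangularity of products of the $\delta_{S(0,d)}$ with respect to a partition-indexed basis, invoke Szczesny's description for commutativity, and read off free polynomiality by induction on dominance order. These are genuinely different arguments; yours is closer to Macdonald's Chapter III treatment that the paper itself cites in the sentence following the proof.

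There is however a real gap in your Step 1, which is also the point where the paper's own argument needs care. You assert that for the local monoid $\mnd{t}$ the unique indecomposable nilpotent module of length $d$ is the chain $C_d = \{*, m_1,\dots,m_d\}$. That is false for $d\ge 3$: nilpotent $\mnd{t}$-modules are the same as finite rooted forests with $t$ as the ``parent'' map, and already at length $3$ there is the non-chain indecomposable $\{*,a,b,c\}$ with $tb=tc=a$, $ta=*$. If such modules are allowed in $\Sky_{\wh{E}/\mnd{q}}$ then the $\delta_{S(0,d)}$ do not even generate (a direct computation of $\delta_{S(0,1)}^{*3}$ produces no $Y$-tree term), so the proposition's statement would fail. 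The proposition can only hold if $\Sky_{\wh{E}/\mnd{q}}$ is restricted to the partition-type objects (finite disjoint unions of chains $C_{\lambda_i}$), which is indeed a subcategory closed under normal subobjects and quotients; you should make that restriction explicit rather than hope the classification comes out clean. Note also, for your own diagnostic, that in this restricted category the sequence $0\to C_1\to C_2\to C_1\to 0$ is exact but not split (no normal section exists since $t\cdot m_2 = m_1 \ne *$), so commutativity really does require the Hall-polynomial symmetry or Szczesny's explicit description as in your Step 2, not a splitting argument; your route is therefore the one that can be made rigorous here.
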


\begin{proof}
The exact sequences in $\Sky_{\wh{E}/\mnd{q}}$ 
are all split,
so that the algebra 
$\Hall(\Sky_{\wh{E}/\mnd{q}})$ is commutative.
It is also clear that $S(0,d)$'s generate 
$\Hall(\Sky_{\wh{E}/\mnd{q}})$,
so the statement holds.
\end{proof}

The argument of \cite[\S4.1]{BS} or 
\cite[Chap.\ III]{Macd}
gives an explicit description of 
the commutative subalgebra $\Hall(\Sky_{\wh{E}/\mnd{q}})$. 
In fact, it is a bialgebra 
under the comultiplication \eqref{eq:Hall:comul},
and it is isomorphic to the classical Hall bialgebra.
In the present context, we have

\begin{prop}
The subalgebra $\Hall(\Sky_{\wh{E}/\mnd{q}})$
is isomorphic to the subalgebra of $\EH_{q,q}$
generated by $w_{(0,d)}$, $d \in \bbZ_{\ge1}$.
The isomorphism is given by 
$(q^{d/2}-q^{-d/2})\delta_{S(0,d)} \mapsto w_{0,d}$.
\end{prop}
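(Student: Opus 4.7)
The plan is to exhibit both algebras as polynomial rings on matched generators, with the specified scalar proportionality forced by the zeta function of $\wh{E}/\mnd{q}$. Proposition \ref{prop:csa} already identifies the left-hand side as the polynomial algebra in the $\delta_{S(0,d)}$, $d \ge 1$. For the right-hand side, note that in $\EH_{q,q}$ any two indices $(0,d),(0,d')$ are parallel, so relation (1) of Fact \ref{fct:BS} implies the $w_{(0,d)}$'s commute pairwise; using the Burban--Schiffmann integral basis of $\EH_{q,t}$ and specializing at $t=q$, one sees that the $w_{(0,d)}$'s are algebraically independent, hence the subalgebra they generate is itself a polynomial ring on this countable family. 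Thus both sides are polynomial rings indexed by $\bbZ_{\ge 1}$, and the problem reduces to pinning down which multiple of $w_{(0,d)}$ should correspond to $\delta_{S(0,d)}$.

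To pin the normalization down, I would transport into the monoidal setting the zeta-function/power-sum dictionary of \cite[\S4.1]{BS} (cf.\ \cite[Chap.\ III]{Macd}). In the classical setting of torsion sheaves on a smooth curve, one has a generating-function identity of the form
\[
 1+\sum_{d \ge 1}\theta_{(0,d)}\, z^d
 = \exp\Bigl(\tsum_{n \ge 1} \alpha_n\, u_{(0,n)}\, z^n\Bigr),
\]
where the $\alpha_n$'s are read off from the local Euler factors of the zeta function of the curve, and $\theta_{(0,d)}$ is the characteristic function of the length-$d$ torsion sheaves. I would establish the exact same identity inside $\Hall(\Sky_{\wh{E}/\mnd{q}})$, with $\theta_{(0,d)}$ interpreted as $\delta_{S(0,d)}$ (up to a combinatorial prefactor coming from counting endomorphism sets in the belian setting). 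By the computation \eqref{eq:zeta}, the resulting Euler product is $(1-qz)(1-z/q)/(1-z)^2$, which is precisely the $t\to q$ specialization of the elliptic zeta in \eqref{eq:qt-Weil}. Hence the power-sum element of $\Hall(\Sky_{\wh{E}/\mnd{q}})$ defined by this identity must be identified with $u_{(0,n)}$ in $\EH_{q,q}$, and the relation $w_{(0,n)}=(q^{n/2}-q^{-n/2})u_{(0,n)}$ recalled in \S\ref{subsec:intro:outline} then yields the stated map $(q^{d/2}-q^{-d/2})\delta_{S(0,d)} \mapsto w_{(0,d)}$.

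The main obstacle will be verifying the monoidal/$\bbF_1$ version of the zeta-to-Hall dictionary in the second step. Concretely, one must count sub-$\shO^{\mon}_{\wh{E}}$-modules of length-$d$ skyscrapers, weighted by the orders of normal-automorphism groups supplied by the quasi-exact structure of Section \ref{sect:Hall}, and show that the resulting formal series coincides with the generating function dictated by $\zeta_{\wh{E}/\mnd{q}}$. This is the genuine analogue of the Eisenstein-series computation underlying \cite[\S4.1]{BS}, but with $\bbF_p$-point counts replaced by the combinatorics of monoidal torsion modules; once it is carried out, the passage to the exponential identity and the identification of $u_{(0,n)}$ with the power-sum generator is formal, and so is the final renormalization by $q^{n/2}-q^{-n/2}$.
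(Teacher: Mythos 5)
Your first paragraph is fine: $\Hall(\Sky_{\wh{E}/\mnd{q}})$ is a free polynomial algebra by Proposition \ref{prop:csa}, and the subalgebra of $\EH_{q,q}$ generated by the parallel classes $w_{(0,d)}$ is free commutative by relation (1) plus the Burban--Schiffmann structure theory. Both sides being polynomial rings on a countable set is indeed the right starting point, and it is also the content of the paper's citation to \cite[\S4.1]{BS} and \cite[Chap.\ III]{Macd} (identification with the classical Hall bialgebra).

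The second step, however, has a genuine flaw. You propose to pin down the normalization by establishing the exponential identity $1 + \sum_{d\ge1} \theta_{(0,d)} z^d = \exp\bigl(\sum_{n\ge 1} \alpha_n u_{(0,n)} z^n\bigr)$ with $\alpha_n$ determined by the zeta function, and then feeding in $\zeta_{\wh{E}/\mnd{q}}$ from \eqref{eq:zeta}. But at $t=q$ one has $\alpha_n = (1-q^n)(1-t^{-n})(1-q^nt^{-n})/n = (1-q^n)(1-q^{-n})\cdot 0 /n = 0$, so the right-hand side collapses to $1$ and the identity degenerates to $\theta_{(0,d)}=0$. This is exactly why the paper earlier inserts the remark that ``$\EH_{q,t=q}$ is still well-defined since $\theta_x/\alpha_1=([\gcd(x)]_{q^{1/2}})^2 u_x$'': both $\theta_x$ and $\alpha_1$ tend to zero and only the ratio survives. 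Your proposed mechanism cannot single out the scalar $q^{d/2}-q^{-d/2}$, because at $p=q/t=1$ the exponential dictionary you invoke carries no information. (Relatedly, $\theta_{(0,d)}$ in \cite{BS} is not the characteristic function of length-$d$ torsion sheaves; those are separate elements, and conflating them obscures the degeneracy.)

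There is a second, related misconception in your last paragraph. You say one must ``count sub-$\shO^{\mon}_{\wh{E}}$-modules of length-$d$ skyscrapers, weighted by the orders of normal-automorphism groups'' and match the result with $\zeta_{\wh{E}/\mnd{q}}$. But Szczesny's Hall algebra of monoidal modules has plain integer structure constants --- there is no $\bbF_q$-point count anywhere --- and Proposition \ref{prop:csa} already records that the exact sequences split, so $\Hall(\Sky_{\wh{E}/\mnd{q}})$ is a free commutative $\bbQ$-algebra with no $q$ in its multiplication. The variable $q$ enters the statement only through the chosen normalization of the generators, i.e.\ through the $w$-versus-$u$ rescaling in $\EH_{q,q}$, not through any combinatorics internal to $\Sky_{\wh{E}/\mnd{q}}$. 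The specific factor $q^{d/2}-q^{-d/2}$ is the choice that makes this map compatible with the $w$-basis and hence extendable (via the $\frkS,\frkT$-action) to the global isomorphism $\EH_{q,q}\simeq \DHall(\catB)$; as an isomorphism of the two polynomial rings in isolation it is not forced, and attempting to force it via the zeta function is the wrong mechanism.
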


By the description of $\EH_{q,q}$, this isomorphism 
enable us to consider the subalgebra 
$\Hall(\Sky_{\wh{E}/\mnd{q}})$ to be defined 
over $\bbZ[q^{\pm1}]$ with basis 
$(q^{d/2}-q^{-d/2})\delta_{S(0,d)}$.

Next, we note the following properties of our Hall algebra.

\begin{prop}
(1)
$\DHall(\catB)$ has a $\bbZ^2$-grading.
We will denote the associated decomposition as  
$\DHall(\catB)=\oplus_{(r,d) \in \bbZ^2}H^{r,d}$.
\\
(2)
The group $\SL(2,\bbZ)$ act on 
$\DHall(\catB)$ as algebra automorphisms,
which induces the natural $\SL(2,\bbZ)$-action on 
the grading $\bbZ^2$.
\\
(3)
For a coprime pair $(r_0,d_0) \in \bbZ^2$,
The submodule $\oplus_{n \in \bbZ} H^{n r_0, n d_0}$
is a commutative algebra isomorphic to 
the Drinfeld double of $\Hall(\Sky_{\wh{E}/\mnd{q}})$.
\end{prop}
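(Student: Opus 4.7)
The plan is to establish the three parts in sequence, with the main structural work concentrated in parts (2) and (3).

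For part (1), I would define the bigrading by assigning to each $M \in \catB$ its rank-degree pair $(r_M,d_M) \in \bbZ^2$. This is well-defined on $\catB$ because every object is obtained from a relative skyscraper $S \in \Sky_{\wh{E}/\mnd{q}}$ (which has $(r,d)=(0,\ell)$ for some $\ell \ge 0$) by iterated application of $\frkS^{\pm 1}$ and $\frkT^{\pm 1}$, and these transform $(r,d)$ by elements of $\SL(2,\bbZ)$. For any short exact sequence $0 \to N \to M \to M/N \to 0$ in $\catB$, rank and degree are additive; this can be verified by passing to $\bbZ$-schemes via \eqref{eq:XtoX_Z} where the corresponding additivity is classical. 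Setting $H^{r,d}$ to be the span of $\delta_{[M]}$ with $(r_M,d_M)=(r,d)$, the Hall product and coproduct preserve this grading, and since the Hopf pairing is nontrivial only when the bigradings agree, the Drinfeld double $\DHall(\catB) = \Hall(\catB) \otimes \Hall(\catB)$ inherits the grading with the second tensor factor contributing with the opposite sign.

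For part (2), I would promote $\frkS$ and $\frkT$ from autoequivalences of the underlying category to automorphisms of $\Hall(\catB)$ and hence of $\DHall(\catB)$. As equivalences on the belian quasi-exact category (modulo cohomological shifts), they preserve isomorphism classes, normal morphisms, short exact sequences, and direct sums, and therefore induce bialgebra homomorphisms $\Hall(\catB)\to\Hall(\catB)$; functoriality of the Drinfeld double then extends these to automorphisms of $\DHall(\catB)$. The definition of $\catB$ as the image of $\Sky_{\wh{E}/\mnd{q}}$ under iterates of $\frkS^{\pm 1}$ and $\frkT^{\pm 1}$ is arranged precisely so that $\frkS$ and $\frkT$ preserve $\catB$. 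Their effect on the grading is $\frkS\colon(r,d)\mapsto(d,-r)$ and $\frkT\colon(r,d)\mapsto(r,r+d)$, the standard generators of $\SL(2,\bbZ)$ on $\bbZ^2$, so the induced action factors through $\SL(2,\bbZ)$.

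For part (3), fix a coprime pair $(r_0,d_0) \in \bbZ^2$ and choose $\gamma \in \SL(2,\bbZ)$ with $\gamma \cdot (0,1) = (r_0,d_0)$. By part (2), $\gamma$ induces an algebra isomorphism $\bigoplus_{n \in \bbZ}H^{0,n} \simto \bigoplus_{n \in \bbZ}H^{nr_0,nd_0}$, so it suffices to treat $(r_0,d_0)=(0,1)$. In this case, the positive part $\bigoplus_{n \ge 0}H^{0,n}$ is $\Hall(\Sky_{\wh{E}/\mnd{q}})$ embedded in the first tensor factor, commutative by Proposition \ref{prop:csa}; the negative part $\bigoplus_{n \le 0}H^{0,n}$ is the corresponding copy in the second tensor factor; and the cross-commutation relations between the two halves are precisely those defining the Drinfeld double via the Hopf pairing. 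Commutativity of the whole subalgebra $\bigoplus_{n \in \bbZ}H^{0,n}$ then follows by the argument of \cite{BS} applied verbatim in the present monoidal setting, ultimately relying on the cocommutativity of $\Hall(\Sky_{\wh{E}/\mnd{q}})$ (which holds because all short exact sequences in $\Sky_{\wh{E}/\mnd{q}}$ split, so that $\Delta(\delta_{[M]})([M_1],[M_2])=\delta_{M, M_1\oplus M_2}$ is symmetric in $M_1, M_2$).

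The main obstacle is the careful treatment of the derived functor $\frkS$ in part (2): one must verify that, after appropriate homological shifts and sign conventions on the generating characteristic functions, $\frkS$ genuinely preserves $\catB$ and sends short exact sequences of normal morphisms to short exact sequences of normal morphisms. This requires Deitmar's theory \cite[\S1.6]{D:bel} of derived functors for belian categories together with a monoidal-scheme analogue of the classical fact that the Fourier-Mukai transform on an elliptic curve exchanges the semi-stable subcategories of slopes $(r,d)$ and $(d,-r)$ up to a cohomological shift.
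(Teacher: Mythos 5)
Your proposal follows essentially the same route as the paper's proof: part (1) via the rank-degree grading on $\catB$, part (2) via the fact that $\frkS$ and $\frkT$ are derived equivalences sending the $\bbZ^2$-grading to its image under the standard $\SL(2,\bbZ)$ generators, and part (3) by using those automorphisms to transport the slice $\oplus_d H^{0,d}$, identified with $\DHall(\Sky_{\wh{E}/\mnd{q}})$, to a general coprime slope $(r_0,d_0)$. You supply more detail than the paper (checking additivity of rank-degree via base change to $\bbZ$, explaining that the Hopf pairing is grade-diagonal, and flagging that $\frkS$ is a derived functor whose action requires a cohomological shift convention to stay inside $\catB$), but the structure and key ideas coincide; the one step that both you and the paper treat as an assertion is the identification $\oplus_{n\in\bbZ}H^{0,n}\simeq\DHall(\Sky_{\wh{E}/\mnd{q}})$, which has a real content (the slice could a priori contain mixed tensors built from sheaves of nonzero rank) and is properly a theorem transferred from \cite{BS}.
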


\begin{proof}
(1) 
The rank and degree of a sheaf give the grading.
\\
(2)
This is a consequence of derived equivalences  
$\frkS$ and $\frkT$.
\\
(3)
The functors $\frkS$ and $\frkT$ 
give the standard 
$\SL(2,\bbZ)$-action on the grading $\bbZ^2$.
Note also that 
the Drinfeld double $\DHall(\Sky_{\wh{E}/\mnd{q}})$
is the same as $\oplus_{d \in \bbZ}H^{0,d}$.
Using the actions of $\frkS$ and $\frkT$,
one can map $\oplus_{d \in \bbZ}H^{0,d}$ to a given 
$\oplus_{n \in \bbZ} H^{n r_0, n d_0}$.
Since $\frkS$ and $\frkT$ preserve exact triangles,
we find that $\oplus_{n \in \bbZ} H^{n r_0, n d_0}$
is isomorphic to $\oplus_{d \in \bbZ}H^{0,d}$.
\end{proof}

The remaining part of the proof of Theorem \ref{thm:main}
is the construction of the algebraic homomorphism
$\varphi:\EH_{q,q} \to \DHall(\catB)$
sending generator $w_{(r,d)}$ 
with coprime $(r,d) \in \bbZ^2$ to $\delta_{S(r,d)}$,
where $S(r,d)$ is the class of a stable sheaf 
with rank $r$ and degree $d$.
In fact, the correspondence 
$w_{(r,d)} \mapsto \delta_{S(r,d)}$
determines the vector space homomorphism 
$\varphi$ uniquely,
since each object of $\catB$ has 
the Harder-Narasimhan filtration and 
the Jordan-H\"{o}lder filtration.
Then the argument in \cite[\S5]{BS} can be applied to 
our situation, showing that $\varphi$ is an algebra homomorphism.
Thus we have the isomorphism $\EH_{q,q} \to \DHall(\catB)$.


\subsection*{Acknowledgements.} 
The author is supported by the Grant-in-aid for 
Scientific Research (No.\ 16K17570), JSPS.
This work is also supported by the 
JSPS for Advancing Strategic International Networks to 
Accelerate the Circulation of Talented Researchers
``Mathematical Science of Symmetry, Topology and Moduli, 
  Evolution of International Research Network based on OCAMI''
and by the JSPS Bilateral Program
``Topological Field Theory and String Theory -- 
  from topological recursion to quantum toroidal algebras".

The author would like to thank Y.~Yonezawa for 
the stimulating discussion on skein algebras.


\end{document}